\pgfplotsset{compat=1.18}
\numberwithin{equation}{section}
\newcommand{\A}{\mathcal{A}}
\newcommand{\N}{\mathcal{N}}
\newcommand{\T}{\mathbb{T}}
\newcommand{\To}{\mathcal{T}}
\newcommand{\conj}[1]{\overline{#1}}
\newcommand{\D}{\mathbb{D}}
\newcommand{\R}{\mathbb{R}}
\newcommand{\cD}{\conj{\mathbb{D}}}
\newcommand{\ip}[2]{\big\langle #1, #2 \big\rangle}
\newcommand{\BMOA}{\textbf{BMOA}}
\newcommand{\Mult}[1]{\textbf{Mult}(#1)}
\newcommand{\hil}{\mathcal{H}}
\newcommand{\hb}{\mathcal{H}(b)}
\newtheorem{mainthm}{Theorem}
\newtheorem{thm}{Theorem}[section]
\newtheorem*{thm*}{Theorem}
\newtheorem{lem}[thm]{Lemma}
\newtheorem{cor}[thm]{Corollary}
\newtheorem*{cor*}{Corollary}
\newtheorem{prop}[thm]{Proposition}
\theoremstyle{definition}
\theoremstyle{definition}
\newtheorem*{defn*}{Definition}
\newtheorem{claim*}{Claim}
\begin{document}

%%%%% To ease editing, for IMPAN journals add:

\baselineskip=17pt

%%%%%%%%%%%%%%%%

\title[Universal multipliers for Sub-Hardy Hilbert spaces]{Universal multipliers for Sub-Hardy Hilbert spaces}

\author[B. Malman]{Bartosz Malman}
\address{Division of Mathematics and Physics \\
        Mälardalen University \\
		721 23 Västerås, Sweden}
\email{bartosz.malman@mdu.se} 

\author[D. Seco]{Daniel Seco}
\address{Universidad de la Laguna e IMAULL,  Avenida Astrof\'isico Francisco S\'anchez, s/n, Departamento de An\'alisis Matem\'atico \\ 38206 San Crist\'obal de La Laguna, Santa Cruz de Tenerife,  Spain} \email{dsecofor@ull.edu.es}

\date{}

\begin{abstract} To every non-extreme point $b$ of the unit ball of $\hil^\infty$ of the unit disk there corresponds a Pythagorean mate, a bounded outer function $a$ satisfying the equation $|a|^2 + |b|^2 = 1$ on the boundary of the disk. We study universal, i.e., simultaneous multipliers for families of de Branges-Rovnyak spaces $\hb$, and develop a general framework for this purpose. Our main results include a new proof of the Davis-McCarthy universal multiplier theorem for the class of all non-extreme spaces $\hb$, a characterization of the Lipschitz classes as the universal multipliers for spaces $\hb$ for which the quotient $b/a$ is contained in a Hardy space, and a similar characterization of the Gevrey classes as the universal multipliers for spaces $\hb$ for which $b/a$ is contained in a Privalov class.
\end{abstract}

\subjclass[2020]{Primary 30H45; Secondary 47B32}

\keywords{de Branges-Rovnyak spaces, multipliers}

\maketitle

\section{Introduction}

\subsection{De Branges-Rovnyak spaces}

The spaces $\hb$ were introduced by de Branges and Rovnyak in \cite{debSSS1966} and named after them. They form a family of Hilbert spaces of analytic functions on the unit disk $\D := \{ z \in \mathbb{C} : |z| < 1\}$ contained in the Hardy space $\hil^2$ of square-summable coefficient power series in $\D$. The family is parametrized by symbols $b$ in the unit ball of $\hil^\infty$, the algebra of bounded analytic functions in $\D$, and a given symbol $b$ defines uniquely the Hilbert space $\hb$ of functions with reproducing kernel of the form \[ k_b(z,\lambda) = \frac{1 - \conj{b(\lambda)} b(z)}{1-\conj{\lambda}z}, \quad z, \lambda \in \D.\] 
In general, it is not particularly easy to understand what functions are members of $\hb$. Various ways of constructing the space appear in the original work of de Branges and Rovnyak in \cite{debSSS1966}, Sarason's short treatise in \cite{sarasonbook}, and the more recent two-volume set \cite{hbspaces1fricainmashreghi}, \cite{hbspaces2fricainmashreghi} by Fricain and Mashreghi. A variety of applications of spaces $\hb$ to operator theory and complex analysis are also treated in those works.

In this article, we study the multiplier algebras of spaces $\hb$:
\[ \Mult{\hb} = \{ m \in \hil^\infty : mf \in \hb \text{ whenever } f \in \hb \}\]
The algebra $\Mult{\hb}$ may very well be trivial. In the case that the symbol $b$ is an inner function, then Crofoot observed in \cite{crofoot1994multipliers} that the only multipliers of $\hb$ are the constant functions. As usual, a function is \textit{inner} if it is bounded in $\D$ and has boundary values on $\T = \partial \D = \{ z \in \mathbb{C} : |z| = 1\}$ of unit modulus almost everywhere. From works of Lotto and Sarason in \cite{lotto1991multiplicative} and \cite{lotto1993multipliers} we know that plenty of non-constant multipliers exist whenever $b$ is not inner. However, an explicit characterization of $\Mult{\hb}$ exists only in a few very special cases. See our discussion in Section \ref{S:OtherResultsSubsection} below for some examples of results of this type.

We will concern ourselves only with the \textit{non-extreme case}, namely, we are assuming that the symbol $b$ satisfies the condition \[ \int_\T \log (1-|b(\zeta)|^2) |d\zeta| > -\infty.\] Here $|d\zeta|$ denotes the arclength measure on $\T$. The above integral convergence is well known to be equivalent to $b$ being a non-extreme point of the unit ball of $\hil^\infty$. It is customary to introduce the outer function $a:\D \to \D$ satisfying the equation \[ |a(\zeta)|^2 + |b(\zeta)|^2 = 1\] for almost every $\zeta \in \T$, and $a(0) > 0$. Then $a$ is uniquely determined by $b$, and we say that $b$ and $a$ form a \textit{Pythagorean pair}. These pairs will play a lead role in our discussion.

One can define a non-extreme space in a particularly useful way as a set of solutions to the \textit{mate equation},
\begin{equation}
    \label{E:MateEquationIntro}
    \To_{\conj{b}}f = \To_{\conj{a}}f_+.
\end{equation} Here $\To_{\conj{b}}$ and $\To_{\conj{a}}$ are Toeplitz operators on $\hil^2$, that is, operators of multiplication by $\conj{b}$ and $\conj{a}$ respectively, followed by an orthogonal projection from $L^2(\T)$ to $\hil^2$. The space $\hb$ can be defined as the set of those $f \in \hil^2$ for which a solution $f_+ \in \hil^2$ exists to the mate equation. It can be shown that at most one solution $f_+$ exists in $\hil^2$, so that the \textit{mate} $f_+$ is well-defined, if it exists. The norm on $\hb$ is then $\|f\|_{\hb} = \sqrt{\|f\|_2^2 + \|f_+\|_2^2}$, where $\| \cdot \|_2$ is the usual $L^2(\T)$ norm. We note also that the non-extreme case is characterized by the containment of the set of analytic polynomials in $\Mult{\hb}$, and their density in $\hb$. Both of these results are due to Sarason.

\subsection{Universal multipliers and the Davis-McCarthy theorem}

One special feature of the spaces $\hb$ is that they are not rotationally invariant. Namely, if $f(z) \in \hb$, then it is not in general the case that $f(e^{i\theta} z) \in \hb$. This feature is shared by the multiplier algebra $\Mult{\hb}$, and membership of a function $f$ in $\hb$ or in $\Mult{\hb}$ often depends on local behaviour of $f$ near distinguished points on $\T$ (for instance, this is the case in examples studied in \cite{fricain2022smirnov}).

Consider, however, a family $\mathcal{F}$ of symbols $b$ which is rotationally invariant: $b(e^{i\theta}z)\in \mathcal{F}$ for every $b(z) \in \mathcal{F}$ and $\theta \in \R$. Then certainly it is to be expected that $\cap_{b \in \mathcal{F}} \Mult{\hb}$ is a space which is at least rotationally invariant, and one may therefore hope for an easier characterization of this intersection. Any function $m$ inside the intersection of the multiplier algebras corresponding to $\mathcal{F}$ may justly be called a \textit{universal multiplier} for $\mathcal{F}$. Our families of symbols $b$ will be defined by membership of the \textit{Pythagorean quotient} $b/a$ in a sufficiently nice space of analytic functions $X$:
\[ \mathcal{F}(X) := \{ b : b/a \in X\}.\] For instance, if $\N^+$ is the Smirnov class of quotients of bounded analytic functions in $\D$ with outer denominator, then $\mathcal{F}(\N^+)$ is readily seen to equal the family of all non-extreme symbols $b$. The universal multipliers in this case have been characterized by Davis and McCarthy in their deep work \cite{davis1991multipliers}. 
For $\alpha \in (0,1/2]$, let $\mathcal{G}_\alpha$ be the Gevrey class \begin{equation}
    \label{E:GevreyEq} \mathcal{G}_{\alpha} := \Big\{ f(z) = \sum_{n \geq 0} \widehat{f}(n) z^n : |\widehat{f}(n)| = O \big(\exp(-cn^\alpha)\big) \text{ for some } c > 0 \Big\}.
\end{equation}
Using McCarthy's earlier work on topologies of $\N^+$ in \cite{mccarthy1990common}, they proved the following result.

\begin{thm*}[\textbf{Davis-McCarthy}]
\thlabel{T:DavisMcCarthyTheorem}
We have
\[\mathcal{G}_{1/2} = \bigcap_{b \in \mathcal{F}(\N^+)} \Mult{\hb}.\]
\end{thm*}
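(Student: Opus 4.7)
My plan is to reduce the Davis-McCarthy statement to a coefficient-level analysis of the mate equation (\ref{E:MateEquationIntro}), and to split the proof into the two inclusions, handling each with the same Fourier-analytic machinery. Expanding $T_{\bar b} f = T_{\bar a} g$ for $f, g \in \hil^2$ on the Fourier side gives the triangular system
\[ \sum_{k \geq 0} \overline{\widehat{a}(k)} \widehat{g}(n+k) = \sum_{k \geq 0} \overline{\widehat{b}(k)} \widehat{f}(n+k), \qquad n \geq 0, \]
which (since $a(0) > 0$) determines $\widehat{g}(n)$ recursively from $\widehat{f}$ and later coefficients of $g$. Thus $f \in \hb$ iff the induced $g$ lies in $\hil^2$, and the multiplier question becomes: for which $m \in \hil^\infty$ does the replacement $f \mapsto mf$ preserve $\hil^2$-convergence of the resulting $g$ for every Pythagorean pair $(a,b)$?

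\textbf{Sufficiency, $\mathcal{G}_{1/2} \subseteq \bigcap_{b \in \mathcal{F}(\N^+)} \Mult{\hb}$.} Let $m \in \mathcal{G}_{1/2}$ so that $|\widehat{m}(n)| \leq C\exp(-c\sqrt{n})$. Fix a non-extreme $b$ with mate $a$. By Sarason's density of polynomials, it suffices to prove a uniform estimate $\|mp\|_{\hb} \leq C'\|p\|_{\hb}$ for analytic polynomials $p$. The bound $\|mp\|_2 \leq \|m\|_\infty \|p\|_2$ is immediate, so the work is in bounding the mate $(mp)_+$ in $\hil^2$. Using the recursion above, I would write $(mp)_+ = mp_+ + E(m,p)$ where $E(m,p)$ is a correction term depending bilinearly on $m$ and $p$, explicitly computable as a convolution of $\widehat{m}$ against a combination of $\widehat{p}$ and $\widehat{p_+}$ weighted by Taylor coefficients of $a$ and $b$. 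The Gevrey-$1/2$ decay of $\widehat{m}$ dominates the worst-case coefficient behaviour of Pythagorean pairs, and a careful Cauchy-Schwarz estimate yields $\|E(m,p)\|_2 \leq C'(\|p\|_2 + \|p_+\|_2)$, completing the inclusion.

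\textbf{Necessity, $\bigcap_{b \in \mathcal{F}(\N^+)} \Mult{\hb} \subseteq \mathcal{G}_{1/2}$.} Let $m$ be a universal multiplier. For each integer $N$, I would construct a specific non-extreme symbol $b_N$ with Pythagorean mate $a_N$ tailored so that boundedness of $M_m$ on $\hb_N$ forces $|\widehat{m}(N)| \leq C \exp(-c\sqrt{N})$. The natural candidates are outer functions of the form $a_N(z) = \exp(-\lambda_N (1+z)/(1-z))$ with $\lambda_N$ tuned so that $|\widehat{a_N}(N)|$ is critical at Gevrey-$1/2$ order at index $N$; these are the classical extremal outer functions realizing the fastest $\hil^\infty$-outer decay. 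The test function $f \equiv 1$ lies in every $\hb_N$, and the mate equation applied to $M_m 1 = m$ produces a coefficient equation that ties $\widehat{m}(N)$ directly to the reciprocal growth of $a_N$ at the $N$-th frequency. Combining this with a closed graph/uniform boundedness argument across the family $\{\hb_N\}_N$ extracts the uniform Gevrey-$1/2$ bound on the coefficients of $m$.

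\textbf{Main obstacle.} The heart of the argument, and where I expect the most technical resistance, is the sharp extremal analysis of outer functions that pins down the exponent $1/2$ in both directions at once: the existence of outer $a_N$ whose coefficients decay at precisely the rate $\exp(-c\sqrt{N})$ supplies the necessary obstruction for Step 2, while the matching upper bound on Pythagorean-mate coefficient behaviour drives the convolution estimate in Step 1. This is the content of McCarthy's prior work \cite{mccarthy1990common}, whose Jensen-type balance between log-integrability of $|a|$ and coefficient growth of $1/a$ singles out the Gevrey-$1/2$ class; translating that topological result into the two mate-equation estimates above is where the careful bookkeeping would be required.
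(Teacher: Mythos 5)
Your proposal diverges substantially from the paper's route, and it contains at least one concrete error and one serious unaddressed gap.

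The paper deduces the Davis--McCarthy theorem as a special case of the general criterion in \thref{P:MainProp}, which identifies universal multipliers for $\mathcal{F}(X)$ with symbols of continuous Hankel operators $H_{\conj{m}}: X \to \conj{\BMOA}$. That proposition is proved entirely at the operator level: one direction uses \thref{C:LottoSarasonCor1} together with density of $\hil^\infty$ in $X$ and Hankel continuity; the other direction uses the linearity of the mate map $h \mapsto \conj{m_+(h)}$ (\thref{L:TLinearity}), the closed graph theorem driven by the Pythagorean factorization stability result \thref{P:PythFactorStabProp} (\thref{L:TContinuity}), and the observation that the mate map is a rank-one perturbation of $H_{\conj{m}}$. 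The Gevrey exponent $1/2$ then drops out from the separate Hankel boundedness characterization in \thref{P:HankelGevreyBoundedness}. Your plan instead works directly at the Fourier-coefficient level of the mate equation, which is closer in spirit to the original Davis--McCarthy strategy and to McCarthy's duality computation — a route the paper explicitly sets out to avoid.

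Two concrete problems. First, in the necessity direction, the functions $a_N(z) = \exp(-\lambda_N(1+z)/(1-z))$ are \emph{singular inner functions}, not outer functions: $\operatorname{Re}\bigl[(1+\zeta)/(1-\zeta)\bigr] = 0$ for $\zeta \in \T \setminus \{1\}$, so $|a_N| \equiv 1$ a.e.\ on $\T$, forcing $b_N \equiv 0$, $\hil(b_N) = \hil^2$, and $\Mult{\hil(b_N)} = \hil^\infty$ — a family that imposes no constraint whatsoever on $m$. The actual extremal outer functions governing the Gevrey-$1/2$ threshold are of a different form (and their role in the argument is exactly what McCarthy's duality theorem encapsulates), so this construction would need to be replaced wholesale. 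Second, in the sufficiency direction, writing $(mp)_+ = m p_+ + E(m,p)$ and claiming $\|E(m,p)\|_2 \leq C'(\|p\|_2 + \|p_+\|_2)$ with $C'$ \emph{uniform over all Pythagorean pairs $(a,b)$ with $b/a \in \N^+$} is precisely the hard content of the theorem. The correction $E$ is defined implicitly through the equation $\To_{\conj a} E = P_+\bigl[m(\conj{b}p - \conj{a}p_+)\bigr]$, which forces you to invert $\To_{\conj a}$; the growth of that inverse is exactly what the Gevrey exponent has to beat, and a generic Cauchy--Schwarz convolution estimate does not see this structure. Neither a closed-graph argument ``across the family $\{\hil(b_N)\}_N$'' nor the appeal to \cite{mccarthy1990common} at the end of your sketch closes either of these gaps.
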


Motivated by the above theorem, we will consider in this article smaller rotationally invariant families $\mathcal{F}(X)$, and characterize their corresponding universal multipliers.

\subsection{Main result}

Our first main result pertains to symbol families with Pythagorean quotients in $\hil^p$. The Hardy space $\hil^p$ consists of those analytic functions in $\D$ which satisfy the integral mean boundedness condition 
\begin{equation}
\label{E:HpNorm}
\|f\|_p^p := \sup_{r \in (0,1)} \int_\T |f(r\zeta)|^p |d\zeta| < \infty.
\end{equation}  We will use the full range $p \in (0, \infty)$. For $\alpha \in (0,1)$, the analytic Lipschitz class $\Lambda^a_\alpha$ is defined as the set of those functions $m$ analytic in $\D$ and continuous in $\cD := \D \cup \T$ which satisfy the Lipschitz-type modulus of continuity estimate 
\[ |m(z)-m(w)| \leq C_m |z-w|^\alpha, \quad z,w \in \D\] for some constant $C_m > 0$. For $\alpha = 1$, in order to present a unified statement of our theorem, we follow a usual convention: we define $\Lambda^a_1$ as the analytic Zygmund class consisting of functions $m$ analytic in $\D$, continuous in $\cD$, and satisfying 
\[ |m(e^{i(t+s)}) + m(e^{i(t-s)}) - 2m(e^{it})| \leq C_m|s|, \quad s,t \in \R. \] For $\alpha > 1$, we define $\Lambda^a_\alpha$ as the space of functions for which the appropriate derivative lies in one of the above introduced classes. Namely, if $n$ is the positive integer satisfying $n < \alpha \leq n+1$, then $\Lambda^a_\alpha$ is to consist of those functions for which the derivative $f^{(n)}$ lies in $\Lambda^a_{\alpha -n}$. With these definitions in place, we can state our first main theorem.

\begin{mainthm}
    \thlabel{T:MainTheorem1}
    For $p \in (0,\infty)$ we have
    \begin{equation*}
        \Lambda^a_{1/p} = \bigcap_{b \in \mathcal{F}(\hil^p)} \Mult{\hb}.
    \end{equation*}
\end{mainthm}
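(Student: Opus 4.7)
The plan is to establish the two containments separately, in both cases exploiting the mate equation \eqref{E:MateEquationIntro} and the Pythagorean quotient $\phi := b/a$.

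For the inclusion $\Lambda^a_{1/p} \subseteq \bigcap_{b \in \mathcal{F}(\mathcal{H}^p)} \Mult{\hb}$, fix $m \in \Lambda^a_{1/p}$ and $b$ with $\phi \in \mathcal{H}^p$. Given $f \in \hb$ with mate $f_+$, I would attempt to identify the mate of $mf$. Using that multiplication by an analytic $m$ commutes with $P_+$ only modulo a Hankel-type residue, a short computation yields
\begin{equation*}
T_{\bar b}(mf) = T_{\bar a}(m f_+) + P_+(m h), \qquad h := \bar b f - \bar a f_+ \in \overline{\mathcal{H}^2_0},
\end{equation*}
so the candidate mate is $(mf)_+ = m f_+ + \tilde g$, where $\tilde g \in \mathcal{H}^2$ must solve $T_{\bar a} \tilde g = P_+(m h)$. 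Since $P_+ h = 0$, the Cauchy projection formula yields the divided-difference representation
\begin{equation*}
P_+(m h)(z) = \frac{1}{2\pi i}\oint_\T \frac{m(\zeta)-m(z)}{\zeta-z}\, h(\zeta)\, d\zeta.
\end{equation*}
The kernel $|m(\zeta)-m(z)|/|\zeta-z|$ is controlled by $|\zeta-z|^{1/p-1}$ in the Hölder/Zygmund range of $1/p$, and otherwise by applying this estimate to a suitable derivative. Writing $h = \bar a(\bar\phi f - f_+)$ then reduces the construction of $\tilde g$ to a fractional-integral-type operator acting on $\bar\phi f - f_+$ weighted by $|a|$, and the hypothesis $\phi \in \mathcal{H}^p$ is precisely what ensures the required $\mathcal{H}^2$ bound on $\tilde g$, with operator norm depending only on $\|m\|_{\Lambda^a_{1/p}}$ and $\|\phi\|_{\mathcal{H}^p}$. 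This shows $mf \in \hb$, so $m$ is a multiplier.

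For the reverse direction, assume $m \in \bigcap_{b \in \mathcal{F}(\mathcal{H}^p)} \Mult{\hb}$. A closed graph / uniform boundedness argument first produces a uniform bound on the multiplier norms $\|M_m\|_{\hb \to \hb}$ in terms of $\|\phi\|_{\mathcal{H}^p}$. I would then construct a family of test symbols $\{b_\lambda\}_{\lambda \in \D}$ whose Pythagorean quotients are of the form $\phi_\lambda(z) = c_\lambda(1-\bar\lambda z)^{-\alpha}$ with $\alpha$ just below $1/p$, normalized so $\|\phi_\lambda\|_{\mathcal{H}^p}$ stays uniformly bounded while $\phi_\lambda$ concentrates at $\lambda$ as $\lambda \to \T$. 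Applying the multiplier inequality to natural test functions in $\mathcal{H}(b_\lambda)$—reproducing kernels $k_{b_\lambda}(\cdot,\lambda)$ or simple modifications thereof—and letting $\lambda \to \T$ converts the multiplier bound into a local estimate on $|m(z)-m(\lambda)|$ on the scale $|z-\lambda|^{1/p}$, which is the defining property of $\Lambda^a_{1/p}$.

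The main obstacle will be matching the sharp exponent $1/p$ on both sides: the divided-difference estimate in the forward direction must absorb exactly the borderline integrability of $\phi \in \mathcal{H}^p$, and the test symbols in the reverse direction must saturate this borderline at the correct scale. The unified statement across all $p \in (0,\infty)$ further requires care at the integer thresholds where the definition of $\Lambda^a_{1/p}$ passes between the Hölder, Zygmund, and higher-order smoothness regimes, so the argument will most naturally be arranged to reduce the higher-order cases to a base case by differentiation.
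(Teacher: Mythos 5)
Your reduction in the forward direction is fine as far as it goes: $mf\in\hb$ if and only if the equation $\To_{\conj{a}}\tilde g=P_+(mh)$, with $h=\conj{b}f-\conj{a}f_+\in\conj{z\hil^2}$, has a solution $\tilde g\in\hil^2$. The gap is in how you propose to solve it. Membership of $P_+(mh)$ in the range of $\To_{\conj{a}}$ is a structural condition, not a size condition, and the divided-difference bound $|m(\zeta)-m(z)|\lesssim|\zeta-z|^{1/p}$ only controls the magnitude of $P_+(mh)$; it says nothing about solvability in $\hil^2$. Writing $h=\conj{a}(\conj{\phi}f-f_+)$ does not ``reduce to a fractional integral weighted by $|a|$'': $\conj{a}$ is not analytic, so it cannot be pulled through $P_+$ (and $\conj{\phi}f-f_+$ need not even be square-integrable), so there is no candidate $\tilde g$ on the table. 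This range problem is exactly the difficulty the paper routes around via Lotto--Sarason: one shows that $m$ (up to an affine correction) equals $\To_{\conj{a}}u$ with $u\in\BMOA$ and invokes \thref{C:LottoSarasonCor1}; producing such a $u$ is where the hypothesis $m\in\Lambda^a_{1/p}$ enters, through the continuity of $H_{\conj{m}}:\hil^p\to\conj{\BMOA}$, proved by the Duren--Romberg--Shields duality ($\Lambda^a_{1/p}$ is the Cauchy dual of $\hil^q$, $1/q=1+1/p$) together with Hardy-space factorization, and using $1/a\in\hil^p$. Note also that your pointwise kernel estimate is unavailable once $1/p\geq 1$ (Zygmund and higher-order regimes), and ``differentiating'' does not obviously interact well with the mate equation.

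The necessity direction has larger gaps. A uniform-boundedness or closed-graph argument needs a linear ambient space, and $\mathcal{F}(\hil^p)$ is not one; you do not say on which space the principle is applied or why the multiplier norm should be controlled by $\|\phi\|_{\hil^p}$. The paper's resolution is to linearize through the mate: the map $h=b/a\mapsto m_+(h)$ is shown to be well defined with values in $\BMOA$ (this uses Lotto--Sarason's theorem on $\bigcap_I\Mult{\hil(Ib)}$, i.e. the invariance of the family under inner multiples --- an ingredient absent from your sketch), linear (a Toeplitz computation plus outerness), and closed, the last step requiring the stability of Pythagorean factorizations under convergence in $\N^+$ (Section 4); only then does the closed graph theorem give continuity, and $m_+$ differs from $H_{\conj{m}}$ by a rank-one term. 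Finally, no mechanism is given for converting boundedness of $M_m$ on $\hil(b_\lambda)$, tested on reproducing kernels, into a pointwise bound on $|m(z)-m(\lambda)|$: what multiplier boundedness yields directly is Toeplitz/Hankel-type information ($m=\To_{\conj{a}}u$, boundedness of $H^*_{\conj{u}}H_{\conj{b}}$), and extracting the sharp $\Lambda^a_{1/p}$ regularity --- including the second-difference Zygmund condition when $1/p$ is an integer and derivative statements for $p<1$ --- is precisely the content of the duality theorem the paper uses; your concentration argument, even if completed, would at best give a first-order H\"older estimate for $p>1$. As it stands, both halves of the proposal assert the key estimates rather than prove them.
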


One immediate corollary is that \[ \A^\infty = \bigcap_{p \in (0,\infty)} \bigcap_{b \in \mathcal{F}(\hil^p)} \Mult{\hb},\] where $\A^\infty = \bigcap_{\alpha > 0} \Lambda^a_{\alpha}$ is the algebra of analytic functions in $\D$ with smooth extensions to $\T$. 

Our second result pertains to the Privalov classes $\N^q$. Recall that functions in $\N^+$ satisfy $\int_\T \log(1+|f|)|d\zeta| < \infty$. For $q > 1$, the class $\N^q$ consists of those functions $f \in \N^+$ for which we have 
\[ \int_\T \big(\log(1+|f|)\big)^q |d\zeta| < \infty. \] The classes $\N^q$ have been studied by Privalov in \cite{PrivalovBook}. 

Recall the definition of the Gevrey classes $\mathcal{G}_a$ in \eqref{E:GevreyEq}. Our second main result is the following.

\begin{mainthm}
\thlabel{T:MainTheorem2}
For $q \in (1, \infty)$ we have 
\[\mathcal{G}_{1/(1+q)} = \bigcap_{b \in \mathcal{F}(\N^q)} \Mult{\hb}.\]
\end{mainthm}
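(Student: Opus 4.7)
The plan is to reduce the multiplier condition $m \in \Mult{\hb}$ to a quantitative $\hil^2$ estimate involving $m$ and the Pythagorean quotient $\phi := b/a$, and then to match the Gevrey coefficient decay of $m$ against the $q$-th power $\log$-integrability that defines $\N^q$.

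First I would reformulate $\Mult{\hb}$ via the mate equation \eqref{E:MateEquationIntro}. For $f \in \hb$ with mate $f_+$, the boundary function $k := bf - af_+$ lies in $z\hil^2$, and a computation with Toeplitz operators shows that $mf$ admits a mate in $\hil^2$ precisely when $P_+(\conj{k}\, m)$ belongs to the range of $\To_{\conj{a}}$ acting on $\hil^2$. Since $k/a = \phi f - f_+$ on $\T$ and $mf_+ \in \hil^2$ automatically, the question reduces to an $\hil^2$ control on the boundary product $m\phi f$, uniformly as $\phi$ varies in $\N^q$ and $f$ in $\hb$.

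For the sufficiency $\mathcal{G}_{1/(1+q)} \subseteq \bigcap_b \Mult{\hb}$, I would exploit the coefficient bound $|\widehat{m}(n)| \leq C e^{-cn^{1/(1+q)}}$. Using a level-set decomposition of $|\phi|$ on $\T$ and matching a Taylor truncation $m = S_N m + R_N m$ to the level (so that $N \sim (\log(1+|\phi|))^{1+q}$), the polynomial $S_N m$ is bounded pointwise, while the tail satisfies $\|R_N m\|_\infty \lesssim e^{-cN^{1/(1+q)}}$. The exponent $\alpha = 1/(1+q)$ is exactly the one making $\alpha(1+q) = 1$, which turns the tail bound $e^{-cN^\alpha}$ into a power $|\phi|^{-c}$, and combines with the hypothesis $\int_\T (\log(1+|\phi|))^q\,|d\zeta| < \infty$ to yield a convergent $\hil^2$ bound for $m \phi f$.

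For necessity, I would construct, for each $m \notin \mathcal{G}_{1/(1+q)}$, a specific $b \in \mathcal{F}(\N^q)$ witnessing $m \notin \Mult{\hb}$. A natural candidate is an outer $\phi$ with extremal singular behavior $\log|\phi(\zeta)| \sim |\zeta - \zeta_0|^{-1/q}$ at some $\zeta_0 \in \T$, placing $\phi$ at the threshold of $\N^q$; the failure of Gevrey decay of $m$ along a sequence $\widehat{m}(n_k)$ then produces, via a test function $f \in \hb$ concentrated near $\zeta_0$, a violation $m\phi f \notin \hil^2$. The main obstacle will be this sharp calibration of exponents: matching $q$ (Privalov) with $1/(1+q)$ (Gevrey) likely requires either an extremal-function analysis in the spirit of McCarthy \cite{mccarthy1990common} on the topology of $\N^+$ --- of which the Davis-McCarthy statement is the $q=1$ boundary case --- or a duality argument identifying $\mathcal{G}_{1/(1+q)}$ as the natural predual of $\N^q$ in this setup.
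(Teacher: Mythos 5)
Your proposal departs from the paper's argument and, more importantly, leaves both directions of the equality with genuine gaps.

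First, the proposed reduction of the multiplier condition is not correct as stated. If $f \in \hb$ has mate $f_+$, the mate equation \eqref{E:MateEquationIntro} says $P_+(\conj{b}f) = P_+(\conj{a}f_+)$, i.e.\ $\conj{b}f - \conj{a}f_+ \in \conj{z\hil^2}$; it is not the analytic combination $bf - af_+$ that lies in $z\hil^2$. More seriously, even after fixing this, the membership $mf \in \hb$ requires producing a mate $(mf)_+ \in \hil^2$, and since $\To_{\conj{b}}$ and multiplication by $m$ do not commute, a pointwise bound on $|m\phi f|$ does not hand you such a mate. You have silently replaced ``$mf$ admits a mate'' by ``$m\phi f$ is square-integrable,'' and that step needs justification. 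The paper instead routes the problem through the Lotto--Sarason criterion (\thref{T:LottoSarason1}, \thref{C:LottoSarasonCor1}): $m \in \Mult{\hb}$ is guaranteed by solving $m = \To_{\conj{a}}u$ with $u \in \BMOA$, which is an operator-theoretic, not pointwise, reformulation.

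Second, your sufficiency argument --- matching a Taylor truncation length $N \sim (\log(1+|\phi|))^{1+q}$ to the level sets of $|\phi|$ --- correctly identifies the exponent calibration $\alpha(1+q)=1$, but stops at the heuristic level. The resulting bound would have to be uniform over all $f \in \hb$ and has to track the full $\hb$-norm $\sqrt{\|f\|_2^2+\|f_+\|_2^2}$, not just $\|f\|_2$; nothing in the sketch addresses this. The paper avoids this by showing (Section \ref{S:ProofSecPart1}) that continuity of the Hankel operator $H_{\conj{m}}:\N^q \to \conj{\BMOA}$ yields $m = \To_{\conj{a}}u$ with $u \in \BMOA$ after passing to a limit of $\hil^\infty$ approximants of $1/a$, and the continuity of $H_{\conj{m}}$ itself is established in \thref{P:HankelGevreyBoundedness} using \thref{L:PrivalovDuality} and \thref{L:PrivalovCoefficients} plus a closed-graph argument. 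These are quantitatively hard facts that your truncation scheme would have to reproduce.

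Third, your necessity direction is acknowledged as unresolved in the proposal itself. You mention two possible routes (extremal functions in the spirit of McCarthy, or duality of $\N^q$), and the second of these is precisely what the paper uses (\thref{L:PrivalovDuality}, the Me\v{s}trovi\'c--Pavi\'cevi\'c duality), but you do not carry it out. Moreover, even granting that duality, you would still need the bridge from ``universal multiplier'' to ``continuous linear functional on $\N^q$.'' The paper builds this bridge through \thref{C:LottoSarasonCor2} (the mate $m_+$ of a universal multiplier lies in $\BMOA$), the linearity and continuity of $h \mapsto \conj{m_+(h)}$ (\thref{L:TLinearity}, \thref{L:TContinuity}, using the closed graph theorem and the Pythagorean stability result \thref{P:PythFactorStabProp}), and the identification of $\conj{m_+}$ as a rank-one perturbation of $H_{\conj{m}}$. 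None of these steps appears in the proposal. In short, the exponent calibration and the relevance of $\N^q$--$\mathcal{G}_{1/(1+q)}$ duality are correctly spotted, but the operator-theoretic scaffolding that turns these observations into a proof is missing.
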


In the proof of our theorems we will use mainly the methods of functional analysis, and our approach focused around the mate equation in \eqref{E:MateEquationIntro} is much different from the one used by Davis and McCarthy in \cite{davis1991multipliers}. The main results, \thref{T:MainTheorem1} and \thref{T:MainTheorem2}, will be deduced as consequences of a more general statement in \thref{P:MainProp} below. Given a space $X$ satisfying certain properties, we will establish in that proposition a bijection between universal multipliers for $\mathcal{F}(X)$ and symbols of certain Hankel operators. In particular, our general result applies to $X = \N^+$, and so we will obtain a new proof of the Davis-McCarthy theorem, one which is independent from McCarthy's work on topologies of the Smirnov class from \cite{mccarthy1990common}. Our starting point will be the research of Lotto and Sarason from \cite{lotto1993multipliers} which deals with operator-theoretic characterization of multipliers of $\hb$ in terms of boundedness of compositions of Hankel operators. An exposition of their results is contained in Section \ref{S:LottoSarasonResearchSection} below. 

\subsection{Other multiplier results and comments} \label{S:OtherResultsSubsection}

\subsubsection{More on the Davis-McCarthy theorem and its variants} 

The original proof of Davis-McCarthy universal multiplier theorem in \cite{davis1991multipliers} is based on earlier results of McCarthy from \cite{mccarthy1990common} on the dual of $\N^+$. There is a natural metric on $\N^+$ (see Section \ref{S:HankelGevreySubsec} below), and the dual with respect to the induced metric topology has been found by Yanagihara in \cite{yanagihara1973multipliers} to equal the Gevrey class $\mathcal{G}_{1/2}$ in \eqref{E:GevreyEq}. The dual of $\N^+$ with respect to a different topology, the so-called Helson topology, is easily seen to equal the intersection of ranges of all co-analytic Toeplitz operators on $\hil^2$ (see \cite{mccarthy1990common} for details). McCarthy showed that the two mentioned topologies have the same duals, and concluded that $\mathcal{G}_{1/2}$ equals the intersection of ranges of all co-analytic Toeplitz operators. The relevance of this result to spaces $\hb$ is that any multiplier for $\hb$ must necessarily be of the form $m = \To_{\conj{a}}u$ for some $u \in \hil^2$ (see \cite{lotto1993multipliers}, for instance), from which it easily follows that a universal multiplier must be contained in the mentioned intersection. That is, it necessarily must be a member of $\mathcal{G}_{1/2}$, by McCarthy's theorem. 

Other authors proved variants of the Davis-McCarthy theorem by following their strategy outlined in the above paragraph. In \cite{mevstrovic2009note}, Me\v{s}trovi\'c and Pavi\'cevi\'c used earlier duality results of Stoll from \cite{stoll1977mean} to find the universal multipliers corresponding to the family of spaces $\hb$ for which the logarithm of the density of the Aleksandrov-Clark measure of $b$ is in $L^q(\T)$, $q > 1$. The condition to be a universal multiplier for this family is the same as in \thref{T:MainTheorem2}, but the two results are different because the families of $\hb$-symbols are different. 

Similarly, our proof of \thref{T:MainTheorem1} is based on \thref{L:DRSDualityTheorem} below, which is a result of Duren, Romberg and Shields, but in fact going back to Hardy and Littlewood, and which identifies the dual space of $\hil^q$, for $q \in (0,1)$, as a Lipschitz class. Note, however, that in spite of this similarity, the proof technique used in the present article is completely different from the one used in \cite{davis1991multipliers} and the related works. One advantage of our approach is that it allows us to compute the universal multipliers for families of symbols $b$ defined in terms of their modulus alone, instead of the less tractable logarithmic integrability of their Aleksandrov-Clark densities.

\subsubsection{Explicit characterization of multipliers for rational $b$}

Dealing with a single symbol $b$ is a different and more delicate question, but an explicit characterization of $\Mult{\hb}$ exists in the case that $b$ is a rational function. In the case that a rational $b$ is not inner, then it is well known that $\hb = M(\conj{a})$, where the latter space is the range of the coanalytic Toeplitz operator $\To_{\conj{a}}$. By a result of Fricain, Hartmann and Ross from \cite{fricain2019multipliers} we conclude that $\Mult{\hb} = \hb \cap \hil^\infty$ if $b$ is rational and not inner. They establish also a very concrete description of the members of $\hb$ for these rational $b$. Naturally, for a general symbol $b$, such a simple characterization of $\Mult{\hb}$ is not available.

\subsubsection{All bounded functions as multipliers} 
The case $\hil^\infty = \Mult{\hb}$ has been settled by Sarason in \cite[Theorem 3]{sarasondoubly}, who gives several conditions equivalent to this equality. One of the equivalent conditons is that $\hb = a\hil^2 = \{ af : f \in \hil^2\}$. Another is that $a$ and $b$ should form a \textit{Corona pair}, in the sense that 
\[ \inf_{z \in \D} (|a(z)| + |b(z)|) > 0\] and $|a|^2$ should be a so-called $A_2$-weight: namely, it should satisfy the estimate
\[ \Bigg(\int_I |a|^2 |d\zeta| \Bigg) \Bigg(\int_I |a|^{-2} |d\zeta| \Bigg) \leq C|I|^2,\] where $I$ is any arc of $\T$ and $C > 0$ is a constant. Davis and McCarthy in \cite{davis1991multipliers} found a similar characterization in terms of the density $w := (1-|b|^2)|1-b|^{-1}$ of the Aleksandrov-Clark measure of $b$. By their result, $\Mult{\hb} = \hil^\infty$ if and only if $w$ is an $A_2$-weight.

\subsection{Outline of the paper and the methods} We begin, in Section \ref{S:LottoSarasonResearchSection}, by presenting the results of Lotto and Sarason from \cite{lotto1993multipliers}. Then, in Section \ref{S:HankelSection}, we characterize the continuity of Hankel operators between relevant pairs of function spaces. We continue in Section \ref{S:PythagoreanSection}, by proving a stability result for Pythagorean factorizations of functions in $\N^+$. We make use of these results in Section \ref{S:MainTheoremProofSection}, where we prove our general universal multiplier criterion and deduce from it \thref{T:MainTheorem1} and \thref{T:MainTheorem2}, as well as the Davis-McCarthy theorem. Then we conclude in Section \ref{sectfin} with some suggestions for continuing research along this direction.

\section{Research of Lotto and Sarason} \label{S:LottoSarasonResearchSection}

The purpose of this section is to present the research of Lotto and Sarason from \cite{lotto1993multipliers}. The important consequences of the results from that work are stated in \thref{C:LottoSarasonCor1} and \thref{C:LottoSarasonCor2} below.

\subsection{Hankel operators} \label{S:HankelOperatorDefSubsect}

Let $P_+$ and $P_-$ be the standard projection operators
\begin{equation}\label{E:PplusDef}
    P_+ f(z) = \int_{\T} \frac{f(\zeta)}{1 - \conj{\zeta}z}|d\zeta|, \quad z \in \D, \, f \in L^1(\T),
\end{equation}
\begin{equation}\label{E:PminusDef}
    P_-f (z) = \int_{\T} \frac{f(\zeta)\zeta \conj{z} }{1-\zeta \conj{z}} |d\zeta|, \quad z \in \D, \, f \in L^1(\T).
\end{equation}

The function $P_+f(z)$ is analytic in $\D$. On the other hand, $P_- f(z)$ is conjugate analytic in $\D$, and it vanishes at $z = 0$. Through the usual identification of the Hardy space $\hil^2$ with a closed subspace of $L^2(\T)$, as well as similar identification of the orthogonal complement $L^2(\T) \ominus \hil^2 =\conj{z \hil^2} = \{ \conj{zf} : f \in \hil^2\}$, the operators $P_+$ and $P_-$ are the orthogonal projections from $L^2(\T)$ onto $\hil^2$ and $\conj{z \hil^2}$, respectively. The actions of $P_+$ and $P_-$ on a Fourier series $f(\zeta) = \sum_{n \in \mathbb{Z}} \widehat{f}(n)\zeta^n \in L^2(\T)$ are given by
\[ P_+ f(\zeta) = \sum_{n \geq 0} \widehat{f}(n)\zeta^n \] and \[ P_- f(\zeta) = \sum_{n < 0 } \widehat{f}(n)\zeta^n.\]
We highlight the easily established identity 
\begin{equation}
    \label{E:PplusPminusEq}
    \conj{P_- f} = P_+\conj{f} - \conj{\widehat{f}(0)}.
\end{equation}

Given a symbol $m \in L^2(\T)$, we will consider the \textit{Hankel operator} 
\begin{equation}\label{E:HankelOpDef}
H_m f := P_- m f, \quad f \in \hil^\infty.
\end{equation} Clearly $H_m f (z)$ is a conjugate analytic function in $\D$ which vanishes at $z = 0$. In particular, we have $H_m f \in \conj{z\hil^2}$, and the conjugate $\conj{H_m f}$ is a member of the space $\hil^2$.

The content of Nehari's theorem from \cite{nehari1957bounded} is that if $m \in \hil^2$ equals the projection $m = P_+ u$ for some $u \in L^\infty(\T)$, then the operator $H_{\conj{m}}$ acts boundedly from $\hil^2$ into $\conj{z\hil^2}$, and that this condition on $m$ is necessary for boundedness. The space of such symbols, namely \[ \BMOA = \big\{ m \in \hil^2 : m = P_+ u \text{  for some u } \in L^\infty(\T) \big\} \] is the space of analytic functions of \textit{bounded mean oscillation}. It is well known that $\hil^1$ and $\BMOA$ are dual to each other, in the sense that every bounded linear functional $\ell$ on $\hil^1$ can be identified with a unique element $m \in \BMOA$ for which we have 
\begin{equation}
    \label{E:H1BMODualityEq}
\ell(f) = \lim_{r \to 1-} \int_\T f(r\zeta) \conj{m(r\zeta)} |d\zeta|
\end{equation} The operator norm of the functional $\ell$ is comparable to $\|m\|_{\BMOA} := \inf \|u\|_{\infty}$, this infimum extending over all bounded functions $u$ satisfying $m = P_+u$. We say that $\BMOA$ is the \textit{Cauchy dual} of $\hil^1$.

\subsection{Lotto-Sarason characterization} In \cite{lotto1993multipliers}, Lotto and Sarason characterized the membership $m \in \Mult{\hb}$ in terms of boundedness of compositions of Hankel operators and their adjoints. The formal adjoint of the operator $H_m$ is given by \[ H^*_m g = P_+ \conj{m} g\] where $g \in \conj{z\hil^2} \cap \conj{\hil^\infty}$, say. Then $H_m$ extends to a bounded operator $\hil^2 \to \conj{z\hil^2}$ if and only if $H^*_m$ extends to a bounded operator $\conj{z\hil^2} \to \hil^2$, and if this is the case, these operators are each others Hilbert space adjoints. The following result (see \cite[Theorem 2]{lotto1993multipliers} for a proof) characterizes multipliers for $\hb$:

\begin{thm} \thlabel{T:LottoSarason1}
    Let $b$ be a non-extreme point of the unit ball of $\hil^\infty$, and let $a$ be the Pythagorean mate of $b$. A function $m \in \hil^\infty$ is a member of $\Mult{\hb}$ if and only if the following three conditions hold.
    \begin{enumerate}[(i)]
        \item There exists $u \in \hil^2$ such that $m = \To_{\conj{a}}u$.
        \item The operator $H^*_{\conj{u}} H_{\conj{a}}$ is bounded on $\hil^2$.
        \item The operator $H^*_{\conj{u}} H_{\conj{b}}$ is bounded on $\hil^2$.        
    \end{enumerate}
\end{thm}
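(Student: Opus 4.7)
The plan is to establish a master identity rewriting $\To_{\conj{b}}(mf)$ as $\To_{\conj{a}}$ applied to an explicit expression, under which conditions (i)--(iii) become exactly the requirements for $mf \in \hb$. I would first derive, for arbitrary $m \in \hil^\infty$ and $f \in \hil^2$, the elementary splitting identity $\To_{\conj{b}}(mf) = m \To_{\conj{b}} f + H^*_{\conj{m}} H_{\conj{b}} f$, obtained from the boundary decomposition $\conj{b} f = \To_{\conj{b}} f + H_{\conj{b}} f$ combined with the observation that $P_+(m \cdot \To_{\conj{b}} f) = m \To_{\conj{b}} f$ when both factors are analytic. The identical identity with $\conj{a}$ replacing $\conj{b}$ and $f_+$ replacing $f$ gives $\To_{\conj{a}}(m f_+) = m \To_{\conj{a}} f_+ + H^*_{\conj{m}} H_{\conj{a}} f_+$. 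Subtracting and invoking the mate equation $\To_{\conj{b}} f = \To_{\conj{a}} f_+$ yields
\[ \To_{\conj{b}}(mf) - \To_{\conj{a}}(m f_+) = H^*_{\conj{m}}\bigl(H_{\conj{b}} f - H_{\conj{a}} f_+\bigr). \]

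Under condition (i), I would pull a $\To_{\conj{a}}$ out of the right-hand side through the operator identity $H^*_{\conj{m}} = \To_{\conj{a}} H^*_{\conj{u}}$ on $\conj{z\hil^2}$. This follows from writing $m = \conj{a} u - H_{\conj{a}} u$ on $\T$ and computing, for $g \in \conj{z\hil^2}$, that $P_+(mg) = P_+(\conj{a} u g) - P_+(H_{\conj{a}} u \cdot g)$; the second summand vanishes because $H_{\conj{a}} u \cdot g$ has Fourier support in $\{n \leq -2\}$, while the first reduces by a short Fourier-support argument to $\To_{\conj{a}}(P_+(ug)) = \To_{\conj{a}} H^*_{\conj{u}} g$. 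Substitution then delivers the master identity
\[ \To_{\conj{b}}(mf) = \To_{\conj{a}}\Bigl(m f_+ + H^*_{\conj{u}} H_{\conj{b}} f - H^*_{\conj{u}} H_{\conj{a}} f_+\Bigr). \]
Sufficiency of (i)--(iii) is now immediate: each of $m f_+$, $H^*_{\conj{u}} H_{\conj{b}} f$, and $H^*_{\conj{u}} H_{\conj{a}} f_+$ lies in $\hil^2$, so the parenthetical expression is precisely the mate of $mf$, and the operator norms in (ii) and (iii) yield a uniform bound $\|mf\|_{\hb} \leq C\|f\|_{\hb}$.

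For the converse, assume $m \in \Mult{\hb}$. The existence of $u$ in (i) is the subtlest point: the easy observation $m = m \cdot 1 \in \hb$ only yields $m \in \hil^2$, but since in general $\hb \supsetneq \To_{\conj{a}}(\hil^2)$, upgrading to $m \in \To_{\conj{a}}(\hil^2)$ requires exploiting the full multiplicative action of $M_m$, for instance through the action of $M^*_m$ on the reproducing kernels $k_b(\cdot, \lambda)$. Once (i) is granted, the master identity rearranges to
\[ H^*_{\conj{u}}\bigl(H_{\conj{b}} f - H_{\conj{a}} f_+\bigr) = (mf)_+ - m f_+ \in \hil^2 \]
uniformly in $f \in \hb$, and separating this single bound into the individual boundedness conditions (ii) and (iii) is a technical exercise in choosing test pairs $(f, f_+)$ — polynomials, the constant function $f = 1$ with $f_+ = \conj{b(0)/a(0)}$, and reproducing kernel shifts — to decouple the two Hankel compositions. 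The principal obstacle throughout is the derivation of (i); every other step is formal manipulation around the master identity.
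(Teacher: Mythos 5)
First, a caveat: the paper does not prove this theorem at all; it quotes it from Lotto and Sarason (their Theorem 2 in the cited work), so there is no internal proof to compare with, and your proposal has to be judged on its own. Your sufficiency direction is essentially a correct reconstruction of the standard argument: the splitting identity $\To_{\conj{b}}(mf)=m\To_{\conj{b}}f+H^*_{\conj{m}}H_{\conj{b}}f$, its analogue for $\conj{a}$ and $f_+$, and the factorization $H^*_{\conj{m}}=\To_{\conj{a}}H^*_{\conj{u}}$ on $\conj{z\hil^2}$ under (i) all check out, and they do yield that $mf_+ + H^*_{\conj{u}}H_{\conj{b}}f - H^*_{\conj{u}}H_{\conj{a}}f_+$ is the mate of $mf$. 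You should, however, be explicit about domains: $u$ is only in $\hil^2$, so the compositions are densely defined, and the master identity should be verified on polynomials (dense in $\hb$ in the non-extreme case) and then extended using the assumed boundedness in (ii) and (iii).

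The genuine gap is the converse. You do not prove (i); you only flag it as "the subtlest point" and gesture at $M_m^*$ acting on kernels, which is not an argument. More seriously, the proposed decoupling of (ii) and (iii) by "choosing test pairs $(f,f_+)$" does not work as described. What the multiplier hypothesis plus the master identity gives is a bound on $H^*_{\conj{u}}\bigl(H_{\conj{b}}f-H_{\conj{a}}f_+\bigr)$ by $C\|f\|_{\hb}$, i.e. by $\bigl(\|f\|_2^2+\|f_+\|_2^2\bigr)^{1/2}$, whereas (ii) and (iii) demand bounds in the $\hil^2$ norm of a single free input; and the admissible pairs are rigidly coupled, since $f_+$ is determined by $f$ and ranges only over mates. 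For instance, when $b$ is outer the only $f\in\hb$ with $f_+=0$ is $f=0$ (because $\To_{\conj{a}}$ is injective and $\ker\To_{\conj{b}}=(b\hil^2)^{\perp}=\{0\}$), so constants, polynomials and kernel shifts cannot isolate $H^*_{\conj{u}}H_{\conj{b}}$ from $H^*_{\conj{u}}H_{\conj{a}}$, nor upgrade an $\hb$-norm bound to an $\hil^2$-norm bound. The known arguments handle necessity quite differently: one first shows that every multiplier of $\hb$ (non-extreme $b$) is a multiplier of $M(\conj{a})=\To_{\conj{a}}\hil^2$, which — by the very identities you derived — yields (i) and (ii) together, and only then extracts (iii) by a separate argument. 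None of that machinery appears in your outline, so the necessity half is a real gap rather than a "technical exercise."
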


Note that the conditions $(ii)$ and $(iii)$ may be satisfied even without the operator $H^*_{\conj{u}}$ being bounded by itself. The question of when a composition of Hankel operators as above is bounded is rather delicate. \thref{T:LottoSarason1} gives us, however, a sufficient condition for membership of $m \in \Mult{\hb}$. Namely, if the solution $u$ to the equation $m = \To_{\conj{a}} u$ happens to satisfy $u \in \BMOA$, then the operator $H_{\conj{u}}$ is bounded by Nehari's theorem, and hence conditions $(ii)$ and $(iii)$ in \thref{T:LottoSarason1} are satisfied by virtue of $a,b \in \hil^\infty \subset \BMOA$. This simple consequence of \thref{T:LottoSarason1} will play an important role in our development. 

\begin{cor}
    \thlabel{C:LottoSarasonCor1}
    If $m \in \hil^\infty$ is of the form \[ m = \To_{\conj{a}} u\] for $u \in \BMOA$, then $m \in \Mult{\hb}$.
\end{cor}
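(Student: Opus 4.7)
The plan is to verify, directly from the hypothesis $u \in \BMOA$, that all three conditions $(i)$--$(iii)$ of \thref{T:LottoSarason1} are satisfied, so that the corollary follows immediately. Condition $(i)$ is given by assumption: $m = \To_{\conj{a}}u$ with $u \in \hil^2$ (this containment being automatic since $\BMOA \subset \hil^2$). So the real task is to verify conditions $(ii)$ and $(iii)$.

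For both $(ii)$ and $(iii)$, I would exploit the fact that each of $u$, $a$, $b$ gives rise to a bounded Hankel operator, and then observe that a composition of bounded operators is bounded. More concretely: since $u \in \BMOA$, Nehari's theorem guarantees that $H_{\conj{u}}$ extends to a bounded operator from $\hil^2$ into $\conj{z\hil^2}$, and consequently its adjoint $H^*_{\conj{u}}$ is a bounded operator from $\conj{z\hil^2}$ into $\hil^2$. On the other hand, since $a, b \in \hil^\infty$, multiplication by $\conj{a}$ or $\conj{b}$ is a bounded operator on $L^2(\T)$, and followed by the bounded projection $P_-$ this yields that $H_{\conj{a}}$ and $H_{\conj{b}}$ are bounded from $\hil^2$ into $\conj{z\hil^2}$ (alternatively, one may invoke Nehari once more, using $\hil^\infty \subset \BMOA$).

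Putting these together, the compositions $H^*_{\conj{u}} H_{\conj{a}}$ and $H^*_{\conj{u}} H_{\conj{b}}$ are compositions of bounded operators mapping $\hil^2 \to \conj{z\hil^2} \to \hil^2$, and hence are bounded operators on $\hil^2$. This verifies $(ii)$ and $(iii)$, and the conclusion $m \in \Mult{\hb}$ then follows from \thref{T:LottoSarason1}.

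There is no real obstacle here; the proof is essentially a one-line observation once one has \thref{T:LottoSarason1} and Nehari's theorem in hand. The only subtlety worth mentioning is that the boundedness of the composition $H^*_{\conj{u}} H_{\conj{a}}$ is required, and not the boundedness of the individual factor $H^*_{\conj{u}}$ acting on $\hil^2$ (which fails in general, since $\BMOA \not\subset \hil^\infty$). The trick is that here both factors happen to be bounded for independent reasons, so the composition is trivially bounded --- the delicacy of compositions of Hankel operators pointed out after \thref{T:LottoSarason1} does not come into play under the stronger hypothesis $u \in \BMOA$.
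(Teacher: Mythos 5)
Your proof is correct and is essentially identical to the paper's own reasoning: verify condition $(i)$ directly, and obtain $(ii)$ and $(iii)$ by combining Nehari's theorem applied to $u \in \BMOA$ with the boundedness of $H_{\conj{a}}$ and $H_{\conj{b}}$ coming from $a, b \in \hil^\infty \subset \BMOA$. Nothing further to add.
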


Lotto and Sarason in \cite{lotto1993multipliers} used \thref{C:LottoSarasonCor1} to give new proofs of certain known statements regarding multipliers on $\hb$. They proved also a remarkable theorem characterizing when a multiplier $m \in \Mult{\hb}$ is simultaneously a multiplier for the family of spaces $\{ \hil(Ib) \}_I$, where $I$ is any inner function.

\begin{thm}
    \thlabel{T:LottoSarason2}
    Let $m = \To_{\conj{a}} u \in \Mult{\hb}$. The following two statements are equivalent. 
    \begin{enumerate}[(i)]
        \item We have \[ m \in \bigcap_{I} \Mult{\hil(Ib)},\] where the intersection is taken over all inner functions $I$.
        \item The Hankel operator $H_{\conj{u}b}: \hil^2 \to \conj{z\hil^2}$ is bounded. 
    \end{enumerate}
\end{thm}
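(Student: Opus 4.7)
The plan is to use \thref{T:LottoSarason1} to recast condition (i) in terms of boundedness of a family of Hankel compositions, and then to link that family to the single operator $H_{\conj{u}b}$ via a commutator identity. The first step is to observe that for every inner function $I$, the modulus equality $|Ib|=|b|$ on $\T$ forces the Pythagorean mate of $Ib$ to be the same outer function $a$. Hence, when \thref{T:LottoSarason1} is applied to the symbol $Ib$, its conditions (i) and (ii) are inherited from the assumption $m = \To_{\conj{a}} u \in \Mult{\hb}$, and the only new content is that $H^*_{\conj{u}} H_{\conj{Ib}}$ be bounded on $\hil^2$. Thus condition (i) of the theorem becomes the requirement that this composition be bounded for every inner $I$.

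Next I will derive the identity
\[ H^*_{\conj{u}} H_{\conj{g}} f \;=\; \To_{\conj{g}}(uf) - u\, \To_{\conj{g}} f,\]
valid for $g \in \hil^\infty$ and test functions $f \in \hil^\infty$. This follows by splitting $\conj{g} f = \To_{\conj{g}} f + H_{\conj{g}} f$ and noting that $u \cdot \To_{\conj{g}} f \in \hil^1$, on which $P_+$ acts as the identity. Thus $H^*_{\conj{u}} H_{\conj{g}}$ is the commutator $[\To_{\conj{g}}, M_u]$, where $M_u$ denotes multiplication by $u$. Applying this to $g = Ib$, using the factorisation $\To_{\conj{Ib}} = \To_{\conj{I}}\To_{\conj{b}}$, and invoking the derivation rule $[AB,C] = A[B,C] + [A,C]B$, I obtain
\[ H^*_{\conj{u}} H_{\conj{Ib}} \;=\; \To_{\conj{I}}\, H^*_{\conj{u}} H_{\conj{b}} \;+\; H^*_{\conj{u}} H_{\conj{I}}\, \To_{\conj{b}}.\]
The first summand is always bounded because $H^*_{\conj{u}} H_{\conj{b}}$ is bounded by hypothesis (condition (iii) of \thref{T:LottoSarason1} for $b$) and $\|\To_{\conj{I}}\| \leq 1$. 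Hence condition (i) reduces to the boundedness of the residual family $\{H^*_{\conj{u}} H_{\conj{I}}\, \To_{\conj{b}}\}_I$.

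The last step is to characterise this residual family. Using the commutator identity once more with $g = I$ and the orthogonal decomposition $\hil^2 = \K_I \oplus I\hil^2$, a short calculation gives $[\To_{\conj{I}}, M_u] h = \To_{\conj{I}}(u \cdot P_{\K_I} h)$, so that the residual operator takes the form $f \mapsto \To_{\conj{I}}\bigl(u \cdot P_{\K_I}(\To_{\conj{b}} f)\bigr)$. For the implication (ii) $\Rightarrow$ (i), I would start from a Nehari representative $\phi \in L^\infty(\T)$ of $H_{\conj{u}b}$ and estimate the pairing $\langle \To_{\conj{I}}(u \cdot P_{\K_I}(\To_{\conj{b}} f)), h\rangle = \langle u \cdot P_{\K_I}(\To_{\conj{b}} f), Ih\rangle$ by re-expressing the right-hand side as an integral of $\conj{u}b$ against an $\hil^1$-function assembled from $f$, $h$, and $I$; the Nehari bound on $\phi$ then yields an estimate uniform in $I$. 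For (i) $\Rightarrow$ (ii), letting $I$ run through a rich family of inner functions (for instance, finite Blaschke products whose zeros cluster on $\T$) one makes the vectors $P_{\K_I}(\To_{\conj{b}} f)$ sweep out a dense subspace, and extracts from the uniform boundedness of the residual operators an $L^\infty$ representative for the anti-analytic part of $\conj{u}b$.

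The main obstacle is precisely this final bridge between the family of model-space-dependent residuals and the single Hankel symbol $\conj{u}b$. In the direction (ii) $\Rightarrow$ (i), the delicate point is to route the Nehari bound through the projections $P_{\K_I}$ without losing uniformity in $I$; in the direction (i) $\Rightarrow$ (ii), it is to reconstruct the Nehari representative by a sufficiently flexible choice of inner functions. The reduction and commutator steps are essentially algebraic and come for free from \thref{T:LottoSarason1} and the derivation rule.
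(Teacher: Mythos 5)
The paper does not prove \thref{T:LottoSarason2}; it merely states it and attributes it to Lotto and Sarason, citing \cite{lotto1993multipliers}. So there is no internal proof to compare against, and your proposal has to stand on its own.

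Your reduction steps are correct. Since $|Ib|=|b|$, the Pythagorean mate of $Ib$ is again $a$, and the hypothesis $m=\To_{\conj{a}}u\in\Mult{\hb}$ supplies conditions $(i)$ and $(ii)$ of \thref{T:LottoSarason1} for every $Ib$. Hence $(i)$ of \thref{T:LottoSarason2} is indeed equivalent to boundedness of $H^*_{\conj{u}}H_{\conj{Ib}}$ for every inner $I$. The commutator identity $H^*_{\conj{u}}H_{\conj{g}}f = \To_{\conj{g}}(uf)-u\To_{\conj{g}}f$ checks out (using $P_+(u\To_{\conj{g}}f)=u\To_{\conj{g}}f$ since $u\To_{\conj{g}}f\in\hil^1$), and the Leibniz split $H^*_{\conj{u}}H_{\conj{Ib}}=\To_{\conj{I}}H^*_{\conj{u}}H_{\conj{b}}+H^*_{\conj{u}}H_{\conj{I}}\To_{\conj{b}}$ is right; so is the representation $H^*_{\conj{u}}H_{\conj{I}}\To_{\conj{b}}f=\To_{\conj{I}}\bigl(u\,P_{\K_I}\To_{\conj{b}}f\bigr)$. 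Up to this point the argument is sound, and it reduces the theorem cleanly to the claim that boundedness of the residual family $\{H^*_{\conj{u}}H_{\conj{I}}\To_{\conj{b}}\}_I$ is equivalent to boundedness of $H_{\conj{u}b}$.

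That last equivalence, however, is not a detail --- it is the entire analytic content of the theorem --- and your proposal does not actually prove it; you say yourself that it is the "main obstacle." Two concrete problems stand out. For $(ii)\Rightarrow(i)$, the sketched manipulation of the pairing $\langle u\,P_{\K_I}(\To_{\conj{b}}f), Ih\rangle$ does not visibly produce an integral against $\conj{u}b$: expanding $P_{\K_I}\To_{\conj{b}}f=\To_{\conj{b}}f - I\To_{\conj{Ib}}f$ leaves two terms, $\int u\conj{I}\conj{h}\,\To_{\conj{b}}f\,|d\zeta|$ and $\int u\conj{h}\,\To_{\conj{Ib}}f\,|d\zeta|$, and the Nehari symbol of $H_{\conj{u}b}$ does not enter either one without further rearrangement that you have not supplied. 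For $(i)\Rightarrow(ii)$, you invoke \emph{uniform} boundedness of the residual family over $I$, but condition $(i)$ of the theorem only asserts that each individual operator $H^*_{\conj{u}}H_{\conj{Ib}}$ is bounded, with no a priori control on the norms as $I$ varies; a uniform bound is precisely something one would have to extract, not assume. Until this bridge is actually built in both directions, the proposal is a correct reduction plus a heuristic, not a proof.
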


The easily verified identity $H_{\conj{u}b} = H_{P_-\conj{u}b}$ and Nehari's theorem imply that the second condition in \thref{T:LottoSarason2} is equivalent to $\conj{P_- \conj{u}b}$ being a member of $\BMOA$. In fact, one can express the condition $(ii)$ intrinsically in terms of the space $\hb$. Indeed, condition $(ii)$ is equivalent to the mate $m_+$ of $m$ (recall the mate equation in \eqref{E:MateEquationIntro}) being contained in $\BMOA$, as follows from the proof of the following corollary.

\begin{cor} 
\thlabel{C:LottoSarasonCor2}
Let $\mathcal{F}$ be a family of symbols invariant under multiplication by inner functions: $b\in \mathcal{F}$ implies that $Ib \in \mathcal{F}$ for every inner function $I$. If $m$ is a universal multiplier for $\mathcal{F}$, then for any $b \in \mathcal{F}$, the mate $m_+$ of $m$ in $\hb$ is a member of $\BMOA$.   \end{cor}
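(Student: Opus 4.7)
The plan is to reduce to the Lotto--Sarason characterization Theorem \ref{T:LottoSarason2} and then relate its condition $(ii)$ to the mate $m_+$ through a direct computation.

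First I would fix $b \in \mathcal{F}$ and observe that, since $|Ib| = |b|$ on $\T$ for any inner $I$, the Pythagorean mate of $Ib$ is the same function $a$ as that of $b$. Since $\mathcal{F}$ is invariant under multiplication by inner functions, the universality hypothesis gives $m \in \bigcap_I \Mult{\hil(Ib)}$. Applying Theorem \ref{T:LottoSarason1} to $\hb$ yields $u \in \hil^2$ with $m = \To_{\conj{a}} u$; note that $u$ is uniquely determined by $m$ because $a$ is outer, so the kernel of $\To_{\conj{a}}$ on $\hil^2$ is trivial. Now apply Theorem \ref{T:LottoSarason2} with this choice of $u$ to conclude that the Hankel operator $H_{\conj{u}b} : \hil^2 \to \conj{z\hil^2}$ is bounded.

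The core of the argument is then to identify the mate. Since Toeplitz operators with co-analytic symbols multiply, $\To_{\conj{a}}\To_{\conj{b}} = \To_{\conj{a}\conj{b}} = \To_{\conj{b}}\To_{\conj{a}}$, one computes
\[
\To_{\conj{b}} m = \To_{\conj{b}} \To_{\conj{a}} u = \To_{\conj{a}} \To_{\conj{b}} u,
\]
so by uniqueness of solutions of the mate equation \eqref{E:MateEquationIntro} we must have $m_+ = \To_{\conj{b}} u = P_+(\conj{b} u)$.

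Finally I would connect this to the boundedness of $H_{\conj{u}b}$. Using the remark preceding the corollary, $H_{\conj{u}b} = H_{P_-(\conj{u}b)}$, and Nehari's theorem says that $H_{\conj{u}b}$ is bounded if and only if $\conj{P_-(\conj{u}b)} \in \BMOA$. But identity \eqref{E:PplusPminusEq} applied to $f = \conj{u}b$ gives
\[
\conj{P_-(\conj{u}b)} = P_+(u \conj{b}) - c = \To_{\conj{b}} u - c = m_+ - c,
\]
where $c = \conj{\widehat{\conj{u}b}(0)}$ is a constant. Since $\BMOA$ contains the constants, the boundedness of $H_{\conj{u}b}$ already established above is equivalent to $m_+ \in \BMOA$, proving the corollary. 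The only mildly delicate point is the identification $m_+ = \To_{\conj{b}}u$; once that is in hand, the rest is a one-line manipulation via \eqref{E:PplusPminusEq} and Nehari's theorem.
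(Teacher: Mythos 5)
Your argument is correct and follows essentially the same route as the paper's own proof: both invoke \thref{T:LottoSarason2} to get boundedness of $H_{\conj{u}b}$, then Nehari's theorem together with identity \eqref{E:PplusPminusEq} to place $\To_{\conj{b}}u$ in $\BMOA$, and finally the Toeplitz commutation relation \eqref{E:ToeplitzHomRelation} to identify $\To_{\conj{b}}u$ as the mate $m_+$. The only difference is that you identify the mate before invoking Nehari and the paper does so after, which is an ordering choice rather than a distinct approach.
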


\begin{proof}
Since $m$ is contained in the intersection of the algebras $\Mult{\hb}$ for $b \in \mathcal{F}$, in particular it is contained in the intersection of the algebras $\Mult{\hil(Ib)}$, where $I$ is an arbitrary inner function. By \thref{T:LottoSarason2}, the operator $H_{\conj{u}b}:\hil^2 \to \conj{z\hil^2}$ is bounded, where $u \in \hil^2$ satisfies $m = \To_{\conj{a}}u$. By Nehari's theorem, $P_-(\conj{u}b) \in \conj{\BMOA}$, the space of complex conjugates of functions in $\BMOA$. Relation \eqref{E:PplusPminusEq} implies that $P_+(\conj{b} u) = \To_{\conj{b}}u \in \BMOA$. Now, note that by the commutation relation stated in \eqref{E:ToeplitzHomRelation} below we have
    \[ \To_{\conj{a}} \To_{\conj{b}} u = \To_{\conj{b}}\To_{\conj{a}}u = \To_{\conj{b}}m.\] This identity and \eqref{E:MateEquationIntro} tells us that $\To_{\conj{b}}u$ is the mate of $m$ in $\hb$. That is, $m_+ = \To_{\conj{b}}u \in \BMOA$. 
\end{proof}

\section{Hankel operators and their continuity} \label{S:HankelSection}

Proofs of our main results will rely on characterization of the continuity of Hankel operators $H_{\conj{m}}$ acting between a space $X$ and $\conj{\BMOA}$. This section presents elementary material on Hankel operators between spaces of analytic functions, and conditions for their continuity. The main results are \thref{P:HankelBoundednessHpToBMOA} and \thref{P:HankelGevreyBoundedness}.

\subsection{Hankel operators with Lipschitz symbols}
We deal first with the case $X = \hil^p$ for $p \in (0,\infty)$. Note that the definition of the Hankel operator in \eqref{E:HankelOpDef} involving the projection operator $P_-$ in \eqref{E:PminusDef} presupposes the integrability of $\conj{m}f$ on $\T$. Therefore, in the case that $p < 1$, we may initially only define $H_{\conj{m}}$ on a dense subset of $\hil^p$. In any case, let us say that $H_{\conj{m}}$ is continuous as an operator from $\hil^p$ into $\conj{\BMOA}$ if there exists a constant $C > 0$ such that for every function $h \in \hil^\infty$ we have the estimate
\begin{equation}
    \label{E:HmBoundednessCond}
\| H_{\conj{m}}h\|_{\conj{\BMOA}} \leq C \|h\|_p, \end{equation} where we use the natural definition $\|g \|_{\conj{\BMOA}} := \| \conj{g}\|_{\BMOA}$ and where $\|\cdot\|_p$ was defined in \eqref{E:HpNorm}. If \eqref{E:HmBoundednessCond} holds, then $H_{\conj{m}}$
extends by continuity from the (dense) subset $\hil^\infty$ to $\hil^p$, and the extension is a continuous linear operator from $\hil^p$ into $\conj{\BMOA}$.

\begin{prop}
    \thlabel{P:HankelBoundednessHpToBMOA} Let $p \in (0,\infty)$ and $m \in \hil^\infty$. The Hankel operator $H_{\conj{m}}$ is continuous from $\hil^p$ into $\conj{\BMOA}$ if and only if $m \in \Lambda^a_{1/p}$.
\end{prop}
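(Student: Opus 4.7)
The plan is to reformulate the Hankel continuity \eqref{E:HmBoundednessCond} as a bilinear form estimate on $\hil^p \times \hil^1$ and then invoke the Duren--Romberg--Shields duality $(\hil^r)^* \cong \Lambda^a_{1/r-1}$ at the H\"older exponent $r := p/(p+1) \in (0,1)$, which satisfies $1/r - 1 = 1/p$. First I would use \eqref{E:H1BMODualityEq}, applied to $\conj{H_{\conj{m}}h} \in \BMOA$, to rewrite
\begin{equation*}
\|H_{\conj{m}}h\|_{\conj{\BMOA}} \asymp \sup\Big\{ \Big|\int_\T g \cdot P_-(\conj{m}h)\,|d\zeta|\Big| : g \in \hil^\infty \cap z\hil^1,\ \|g\|_1 \leq 1\Big\},
\end{equation*}
the restriction to $z\hil^1$ being possible because $\conj{H_{\conj{m}}h}$ vanishes at the origin. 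A short Fourier-coefficient computation using \eqref{E:PplusPminusEq} then identifies the paired integral above with $\int_\T \conj{m}(hg)\,|d\zeta|$. Consequently, continuity of $H_{\conj{m}}:\hil^p \to \conj{\BMOA}$ is equivalent to the bilinear estimate
\begin{equation*}
\Big|\int_\T \conj{m}(hg)\,|d\zeta|\Big| \leq C\|h\|_p\|g\|_1, \qquad h \in \hil^\infty,\ g \in \hil^\infty \cap z\hil^1.
\end{equation*}

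For the sufficiency direction, assume $m \in \Lambda^a_{1/p}$. H\"older's inequality on $\T$ yields $\|hg\|_r \leq \|h\|_p\|g\|_1$, and the Duren--Romberg--Shields duality theorem (\thref{L:DRSDualityTheorem}) realizes $\Lambda^a_{1/p}$ as the dual of $\hil^r$ via the pairing $F \mapsto \lim_{\rho\to 1^-}\int_\T F(\rho\zeta)\conj{m(\rho\zeta)}\,|d\zeta|$, with operator norm comparable to $\|m\|_{\Lambda^a_{1/p}}$. Taking $F = hg$ yields the required estimate at once.

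For the necessity direction, I would invoke the classical strong factorization of Hardy spaces: every $F \in \hil^r$ with $F(0)=0$ admits a decomposition $F = hg$ with $h \in \hil^p$, $g \in z\hil^1$, and $\|h\|_p\|g\|_1 \leq C\|F\|_r$, obtained by splitting the outer part according to $|h_o| = |F_o|^{r/p}$, $|g_o| = |F_o|^{r}$ and absorbing the inner factors appropriately. Applied to $F - F(0)$, together with the bound $|F(0)|\leq \|F\|_r$ coming from subharmonicity of $|F|^r$, the bilinear estimate gives $|\int_\T F \conj{m}\,|d\zeta||\leq C\|F\|_r$ on the dense subclass of polynomials in $\hil^r$. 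A second application of Duren--Romberg--Shields then places $m \in \Lambda^a_{1/r-1} = \Lambda^a_{1/p}$. The principal technical obstacle is the careful bookkeeping of constant terms in the Fourier-coefficient reduction and the factorization step, plus the verification that the pairing furnished by Duren--Romberg--Shields coincides (through radial limits) with the one entering the bilinear estimate; once this is in place, the statement of Duren--Romberg--Shields is uniform across the exponent range, so no additional work is needed to accommodate the higher-derivative definition of $\Lambda^a_{1/p}$ when $p \leq 1$.
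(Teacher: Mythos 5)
Your proposal is correct and follows essentially the same route as the paper's proof: reduce continuity of $H_{\conj{m}}$ to a bilinear estimate via the $\hil^1$--$\BMOA$ duality, identify the pairing with $\int_\T \conj{m}\,hg\,|d\zeta|$, and then use H\"older in one direction and the strong factorization of $\hil^{p/(p+1)}$ in the other, together with the Duren--Romberg--Shields identification of the Cauchy dual of $\hil^{p/(p+1)}$ with $\Lambda^a_{1/p}$. The only cosmetic differences are your explicit restriction to bounded test functions and polynomials before invoking density, which the paper handles implicitly.
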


The proof is essentially the same as the classical proof of boundedness of the Hankel operator $H_{\conj{m}}:\hil^2 \to \conj{z\hil^2}$ being equivalent to $m \in \BMOA$. The pivotal point in the classical proof is the ability to factor a function in the predual to $\BMOA$, which is $\hil^1$, into a product of two functions in $\hil^2$. We follow this idea in our proof of \thref{P:HankelBoundednessHpToBMOA}.

In order to apply the reasoning in the last paragraph, we need to identify the Cauchy predual space of the analytic Lipschitz classes. This is done in the following result.

\begin{lem}[Duren-Romberg-Shields, Hardy-Littlewood] \thlabel{L:DRSDualityTheorem}
Let $q \in (0, 1)$ and $m \in \hil^\infty$. The following statements are equivalent.

\begin{enumerate}[(i)]
    \item The limit \[\ell_m(f) := \lim_{r \to 1-} \int_\T f(r\zeta) \conj{m(r\zeta)} |d\zeta|\] exists for every $f \in \hil^q$ and satisfies the bound \[ |\ell_m(f)| \leq C_m \|f\|_q\] for some constant $C_m > 0$.
    \item We have $m \in \Lambda^a_{1/q - 1}$.
\end{enumerate}    
\end{lem}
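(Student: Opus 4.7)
The plan is to prove the two implications of the equivalence separately, setting $\alpha := 1/q - 1 > 0$ throughout. Note that if the bound $|\ell_m(f)| \leq C_m \|f\|_q$ is established for $f$ in the dense subspace $\hil^\infty \subset \hil^q$ (where the limit defining $\ell_m(f)$ is unproblematic by dominated convergence), then both the existence of $\ell_m(f)$ on all of $\hil^q$ and the norm bound extend automatically by continuity, so it suffices to work on this dense subspace in each direction.

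For the direction (ii) $\Rightarrow$ (i), I would invoke the atomic decomposition of $\hil^q$: every $f \in \hil^q$ admits a representation $f = \sum_j \lambda_j A_j$ in which each $A_j$ is an $\hil^q$-atom associated to an arc $I_j \subset \T$, normalized so that $\|A_j\|_\infty \leq |I_j|^{-1/q}$ and having vanishing moments of order $\lfloor \alpha \rfloor$, while $\sum_j |\lambda_j|^q \sim \|f\|_q^q$. For each atom I would replace $m$ by $m - p_j$, where $p_j$ is the Taylor polynomial of $m$ of degree $\lfloor \alpha \rfloor$ centered on $I_j$ (permissible by the vanishing-moment condition), and then use the Lipschitz bound $\sup_{I_j}|m-p_j| \lesssim |I_j|^\alpha$ to estimate
\[ |\ell_m(A_j)| = \Big| \int_\T A_j \conj{(m - p_j)}\,|d\zeta| \Big| \leq \|A_j\|_\infty \cdot |I_j| \cdot C|I_j|^\alpha \leq C, \]
the last inequality encoding exactly the matching $\alpha = 1/q - 1$. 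Summing and using the elementary embedding $\ell^q \hookrightarrow \ell^1$ valid for $0 < q \leq 1$ then yields $|\ell_m(f)| \leq C \sum_j |\lambda_j| \leq C(\sum_j |\lambda_j|^q)^{1/q} \lesssim \|f\|_q$, as required.

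For the converse (i) $\Rightarrow$ (ii), I would test the bounded functional $\ell_m$ on the reproducing-kernel-type family $g_\lambda(z) := z^n(1 - \conj\lambda z)^{-(n+1)}$ for $\lambda \in \D$ and a fixed integer $n$ with $(n+1)q > 1$. A direct Taylor-series computation gives $\ell_m(g_\lambda) = \conj{m^{(n)}(\lambda)}/n!$, while the standard Forelli--Rudin asymptotic yields $\|g_\lambda\|_q \sim (1-|\lambda|)^{1/q - (n+1)} = (1-|\lambda|)^{\alpha - n}$. Combining these immediately produces the radial derivative estimate
\[ |m^{(n)}(\lambda)| \leq C(1-|\lambda|)^{\alpha - n}, \qquad \lambda \in \D. \]
By the classical Hardy--Littlewood characterization of Lipschitz classes on the disk, this growth bound (for any fixed integer $n > \alpha$) is equivalent to $m \in \Lambda^a_\alpha$ in the sense of the paper, treating uniformly the Hölder, Zygmund, and higher-order regimes.

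I expect the principal obstacle to lie in the (ii) $\Rightarrow$ (i) direction, specifically in locating and verifying a version of the atomic decomposition well-adapted to the analytic Hardy space $\hil^q(\D)$ with atoms carrying exactly the required vanishing-moment order to match the smoothness $\alpha$ of $m$; this is standard for real-variable Hardy spaces but requires some care for the analytic variant on $\T$. An alternative route, closer to the original Duren--Romberg--Shields strategy, would bypass atoms altogether by using the Hardy--Littlewood derivative estimate $M_1(f^{(k)}, r) \lesssim (1-r)^{1/q - 1 - k}\|f\|_q$ for $f \in \hil^q$ together with an iterated Green-type identity to rewrite $\ell_m(f)$ as a weighted pairing of high derivatives; this would be more computational but sidesteps the atomic machinery entirely.
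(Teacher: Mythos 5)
Your direction (i) $\Rightarrow$ (ii) is sound: the test functions $g_\lambda(z)=z^n(1-\conj{\lambda}z)^{-(n+1)}$ are bounded for each fixed $\lambda\in\D$, the coefficient computation gives $\ell_m(g_\lambda)=2\pi\,\conj{m^{(n)}(\lambda)}/n!$ (up to the normalization of $|d\zeta|$), the Forelli--Rudin estimate gives $\|g_\lambda\|_q\lesssim(1-|\lambda|)^{1/q-(n+1)}$ once $(n+1)q>1$, and the Hardy--Littlewood growth characterization of $\Lambda^a_\alpha$ (valid uniformly in the H\"older, Zygmund and higher-order regimes) finishes that half. Note also that the paper offers no proof of this lemma at all: it is quoted from Duren--Romberg--Shields, whose argument is precisely the integral-means route you mention only as an ``alternative'' at the end; so if you want to match the cited source, that second sketch, not the atomic one, is the relevant comparison.

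The genuine gap is in (ii) $\Rightarrow$ (i), at the integer values of $\alpha=1/q-1$ (e.g.\ $q=1/2$, $\alpha=1$), which the lemma explicitly covers and which the paper needs (it is the case $p=1$ of Theorem~A). There $\Lambda^a_\alpha$ is the Zygmund-type class: its members need not have a bounded, or even everywhere existing, derivative of order $\lfloor\alpha\rfloor$ on $\T$ (the Hardy--Weierstrass function $\sum_k 2^{-k}z^{2^k}$ is Zygmund and nowhere differentiable on $\T$), so ``the Taylor polynomial of $m$ of degree $\lfloor\alpha\rfloor$ centered on $I_j$'' is not available on the boundary, and centering it at an interior point only yields $\sup_{I_j}|m-p_j|\lesssim|I_j|\log(1/|I_j|)$, since $|m''(z)|\lesssim(1-|z|)^{-1}$ is all the Zygmund condition gives; that logarithm destroys exactly the critical scaling $|I_j|^{-1/q}\cdot|I_j|\cdot|I_j|^{\alpha}=1$. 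The repair is to subtract instead a near-best approximating polynomial of degree $\lfloor\alpha\rfloor$ on $I_j$ and invoke Whitney's theorem, $E_{\lfloor\alpha\rfloor}(m,I_j)\sim\omega_{\lfloor\alpha\rfloor+1}(m,|I_j|)\lesssim|I_j|^{\alpha}$, i.e.\ work with higher-order differences rather than Taylor remainders. Separately, two technical points you flag but do not resolve still need real work: the atoms of the real-variable decomposition satisfy moment conditions against polynomials in the arclength variable, not against the conjugate-analytic polynomials $\conj{p_j}$ you subtract, so the cancellation $\int A_j\conj{p_j}\,|d\zeta|=0$ must be arranged (or errors absorbed); and the termwise pairing $\sum_j\lambda_j\int A_j\conj{m}$ must be identified with $\lim_{r\to1}\int f(r\zeta)\conj{m(r\zeta)}\,|d\zeta|$ -- your reduction to $f\in\hil^\infty$ via density and the Abel-summation remark handles the limit, but the interchange of the (in general only $\hil^q$- and distributionally convergent) atomic series with the integral still has to be justified, e.g.\ by using a decomposition convergent in $L^2$ for $f\in\hil^\infty$.
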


In other words, the Cauchy dual of $\hil^q$ is $\Lambda^a_{1/q - 1}$. Duren, Romberg and Shields give a proof in \cite{romberg1969linear}. The critical estimate in their proof is an inequality for integral means of analytic functions due to Hardy and Littlewood from \cite[page 412]{hardy1931fractionalintegrals}.

We need also an elementary factorization result.

\begin{lem} \thlabel{L:FactorizationHp}
Let the positive real numbers $p, q, s$ be related by \[ \frac{1}{q} = \frac{1}{s} + \frac{1}{p}.\]

\begin{enumerate}[(i)]
    \item Every function $f \in \hil^q$ can be factored as $f= g\cdot h$, with $g \in \hil^s$ and $h \in \hil^p$ in such a way that \[ \|f\|_q = \|g\|_s \cdot \|h\|_p.\]
    \item  Conversely, given $g \in \hil^s$ and $h \in \hil^p$, the product $f = gh$ satisfies 
    \[ \|f\|_q \leq \|g\|_s \cdot \|h\|_p\]    
\end{enumerate}
\end{lem}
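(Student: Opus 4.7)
The plan for (ii) is to apply Hölder's inequality directly. Since the hypothesis $1/q = 1/s + 1/p$ rearranges to $q/s + q/p = 1$, the exponents $s/q$ and $p/q$ are Hölder conjugate. Applying Hölder to the product $|g(r\zeta)|^q \cdot |h(r\zeta)|^q$ on $\T$ with these conjugate exponents gives
\[ \int_\T |g(r\zeta) h(r\zeta)|^q |d\zeta| \leq \left(\int_\T |g(r\zeta)|^s |d\zeta|\right)^{q/s} \left(\int_\T |h(r\zeta)|^p |d\zeta|\right)^{q/p}, \]
and taking the supremum over $r \in (0,1)$ yields $\|gh\|_q \leq \|g\|_s \|h\|_p$. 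This half is routine.

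For (i), my plan is to split the outer factor of $f$ multiplicatively. I would start from the canonical inner-outer factorization $f = I_f O_f$, where $I_f$ is inner and $O_f$ is the outer function with $|O_f| = |f|$ almost everywhere on $\T$, normalized so that $O_f(0) > 0$. Since $q/s + q/p = 1$, I would define $O_1$ and $O_2$ as the outer functions in $\D$ with boundary moduli $|O_1| = |f|^{q/s}$ and $|O_2| = |f|^{q/p}$ on $\T$, each taking positive values at the origin. These are genuine outer functions because the log-integrability of $|O_f|$ on $\T$ (automatic for nonzero $\hil^q$-functions) transfers directly to $|f|^{q/s}$ and $|f|^{q/p}$, and the prescribed moduli lie in $L^s(\T)$ and $L^p(\T)$ respectively by the identity $\int_\T |O_1|^s |d\zeta| = \int_\T |f|^q |d\zeta| = \|f\|_q^q < \infty$ and its analogue for $O_2$. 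Setting $g := I_f O_1$ and $h := O_2$, one has $g \in \hil^s$ with $\|g\|_s = \|f\|_q^{q/s}$ and $h \in \hil^p$ with $\|h\|_p = \|f\|_q^{q/p}$. The product $gh = I_f (O_1 O_2)$ has inner part $I_f$, while $O_1 O_2$ is outer with the same boundary modulus as $O_f$ and positive value at the origin, so uniqueness of the inner-outer factorization forces $gh = f$. Multiplying the two norm identities gives $\|g\|_s \|h\|_p = \|f\|_q^{q/s + q/p} = \|f\|_q$, as required.

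I do not expect any substantial obstacle; this is a classical factorization. The only point requiring a small check is that the prescribed moduli $|f|^{q/s}$ and $|f|^{q/p}$ define bona fide outer functions, which reduces to noting that $\log|f| \in L^1(\T)$ (automatic for nonzero $f \in \hil^q$) implies $c \log|f| \in L^1(\T)$ for every $c > 0$. Everything else is a straightforward bookkeeping exercise built on the Hölder-dual relation $q/s + q/p = 1$ and the multiplicativity of boundary moduli under Hardy-space inner-outer factorization.
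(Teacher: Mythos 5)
Your proposal is correct and follows exactly the approach the paper sketches: inner-outer factorization with a multiplicative split of the outer part for (i), and Hölder's inequality with conjugate exponents $s/q$ and $p/q$ for (ii). The paper only gives a one-line indication of this argument; your write-up fills in the routine details accurately, including the norm bookkeeping $\|g\|_s\|h\|_p = \|f\|_q^{q/s+q/p} = \|f\|_q$.
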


Part $(i)$ follows readily from the inner-outer factorization of Hardy space functions, while part $(ii)$ follows from an application of Hölder's inequality.

\begin{proof}[Proof of \thref{P:HankelBoundednessHpToBMOA}]
Fix $p \in (0,\infty)$ and let $m \in \Lambda^a_{1/p}$. To show that $H_{\conj{m}}$ is continuous from $\hil^p$ into $\conj{\BMOA}$ it will suffice to show by the $\hil^1-\BMOA$ duality that we have an estimate of the form
\[ \Big\vert \int_\T g H_{\conj{m}}h |d\zeta| \Big\vert \leq C_m \|g\|_1 \|h\|_p\] for every pair of functions $g$ and $h$ in $\hil^\infty$. Note that the left-hand side in the inequality above vanishes if $g$ is a constant function, and so we may assume that $g(0) = 0$. Set $f = gh$. The expression inside the absolute value on the right-hand side above equals the inner product $\ip{P_- \conj{m}h}{\conj{g}}$ in $L^2(\T)$, and since $\conj{g} \in \conj{z\hil^2} = L^2(\T) \ominus \hil^2$, it equals \begin{align*}
    \int_\T g H_{\conj{m}}h |d\zeta| &= \ip{\conj{m}h}{\conj{g}} \\
    &= \int_\T \conj{m} gh |d\zeta| \\
    &=  \int_\T \conj{m} f |d\zeta|. 
\end{align*}
If $q$ satisfies $\frac{1}{q} = 1 + \frac{1}{p}$, then $\Lambda^a_{1/p} = \Lambda^a_{1/q-1}$, and by \thref{L:DRSDualityTheorem} and part $(ii)$ of \thref{L:FactorizationHp}, last expression is bounded in modulus by \[C_m \|f\|_q \leq C_m \|g\|_1 \|h\|_p.\] Thus $H_{\conj{m}}$ is continuous from $\hil^p$ into $\conj{\BMOA}$.

If conversely $H_{\conj{m}}$ is continuous from $\hil^p$ into $\conj{\BMOA}$, then to show that $m \in \Lambda^a_{1/p}$ it will suffice by \thref{L:DRSDualityTheorem} to show that \[ \Big\vert \int_\T \conj{f} m |d\zeta| \Big\vert \leq C \|f\|_q\] for some constant $C > 0$, where $q$ is as in the proof of the previous implication. If $f$ is a constant function, then the inequality is obvious, and so we may assume that $f(0) = 0$. Factor $f = gh$ according to part $(i)$ of \thref{L:FactorizationHp}, so that $g(0) = 0$, and $\|f\|_q = \|g\|_1 \cdot \|h\|_p$. Then $\conj{g} \in \conj{z\hil^2}$, and so 
\begin{align*}
    \int_\T \conj{f}m |d\zeta| &= \ip{\conj{gh}}{\conj{m}} \\
    &= \ip{\conj{g}}{H_{\conj{m}} h}\\
    &= \int_\T \conj{g} \conj{H_{\conj{m}}h} |d\zeta|
\end{align*}
Using the $\hil^1-\BMOA$ duality and the assumption of continuity of $H_{\conj{m}}$, the last expression is bounded in modulus by
\[ C \|g\|_1 \cdot \|H_{\conj{m}}h\|_{\conj{\BMOA}} \leq C' \|g\|_1 \cdot \|h\|_p = C'\|f\|_q.\] The proof is complete.
\end{proof}

\subsection{Hankel operators with Gevrey symbols} \label{S:HankelGevreySubsec}
The purpose now is to derive results similar to the ones above, but which apply to Hankel operators with Gevrey symbols defined in \eqref{E:GevreyEq}. The domain of the Hankel operator $H_{\conj{m}}$ will be the Privalov class $\N^q$ for $q > 1$, or the Smirnov class $\N^+$. Topologies on these spaces will be induced by the translation-invariant metrics 
\begin{equation}
\label{E:PrivalovMetricDef}
\|f-g\|_{\N^q} := \int_{\T} \big(\log(1+|f-g|)\big)^q |d\zeta|, \quad f,g \in \N^q,
\end{equation}
and
\begin{equation}
\label{E:SmirnovMetricDef} \|f-g\|_{\N^+} := \int_{\T} \log(1+|f-g|) |d\zeta|, \quad f,g \in \N^+.
\end{equation}

With these definitions, $\N^q$ and $\N^+$ become so-called $F$-spaces (see \cite[Section 1.8]{rudin1991functional}). A topological vector space $X$ is said to be an $F$-space if scalar multiplication and addition are continuous in $X$, and the topology of $X$ is induced by a complete and translation-invariant metric. Whenever we mention the space $\N^q$ or $\N^+$, it is understood that they are equipped with the respective metric topologies. 

We say that the Hankel operator $H_{\conj{m}}$ is continuous from $\N^q$ (or $\N^+$) into $\conj{\BMOA}$ if $H_{\conj{m}}$ defined in \eqref{E:HankelOpDef} has a continuous extension to an operator from $\N^q$ (or $\N^+)$ into $\conj{\BMOA}$.

\begin{prop}  \thlabel{P:HankelGevreyBoundedness}
\quad
\begin{enumerate}[(i)] 
\item For $q > 1$, the Hankel operator $H_{\conj{m}}: \N^q \to \conj{\BMOA}$ is continuous if and only if $m \in \mathcal{G}_{1/(1+q)}$.
\item The Hankel operator $H_{\conj{m}}: \N^+ \to \conj{\BMOA}$ is continuous if and only if $m \in \mathcal{G}_{1/2}$.
\end{enumerate}
\end{prop}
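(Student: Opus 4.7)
I plan to prove Proposition \thref{P:HankelGevreyBoundedness} by mirroring the argument of Proposition \thref{P:HankelBoundednessHpToBMOA}, substituting three ingredients appropriate to the $F$-space setting. The $\hil^1$-$\BMOA$ duality plays the same bridging role. In place of the Duren--Romberg--Shields duality I invoke Yanagihara's theorem from \cite{yanagihara1973multipliers} (the Cauchy dual of $\N^+$ is $\mathcal{G}_{1/2}$) and Stoll's duality from \cite{stoll1977mean} (the Cauchy dual of $\N^q$ is $\mathcal{G}_{1/(1+q)}$). In place of the Hardy factorization \thref{L:FactorizationHp} I use the elementary sub-additivity
\[\log(1+uv) \leq \log(1+u) + \log(1+v), \quad u, v \geq 0,\]
which combined with $(x_1 + x_2)^q \leq 2^{q-1}(x_1^q + x_2^q)$ for $q \geq 1$ yields $\|gh\|_{\N^q} \leq 2^{q-1}(\|g\|_{\N^q} + \|h\|_{\N^q})$. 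Since $\conj{\BMOA}$ is Banach and $\N^q$, $\N^+$ are $F$-spaces, continuity of the Hankel operator reduces to boundedness on some neighborhood of the origin.

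For the direction $m \in \mathcal{G}_{1/(1+q)} \Rightarrow H_{\conj m}$ continuous, the $\hil^1$-$\BMOA$ duality reduces the task to a uniform bound on $\ell_m(gh) = \int_\T \conj m\, gh\, |d\zeta|$ as $g$ varies over $\{ g \in \hil^\infty : g(0) = 0, \, \|g\|_1 \leq 1\}$ and $h$ over a small ball in $\N^q$. The pointwise estimate $\log(1+x) \leq C_q x^{1/q}$ for $q > 1$ (trivially $\log(1+x) \leq x$ in the $\N^+$ case) gives $\|g\|_{\N^q} \leq C_q^q \|g\|_1 \leq C_q^q$, and together with sub-additivity a single dilation $gh \mapsto gh/K$, with $K$ chosen in terms of $m$ and $q$, forces $\|gh/K\|_{\N^q}$ into the Cauchy continuity neighborhood of $\ell_m$ furnished by Yanagihara/Stoll. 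The linearity of $\ell_m$ then returns $|\ell_m(gh)| = K|\ell_m(gh/K)| \leq KC_\ell$, a uniform bound.

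For the converse, assuming $H_{\conj m}$ is continuous with parameters $(\delta_0, C_0)$, an analytic polynomial $f$ with $f(0) = 0$ and $\|f\|_{\N^q} \leq \delta_0$ admits the trivial factorization $g = z$, $h = f/z$, placing both factors in $\hil^\infty$ with $g(0) = 0$, $\|g\|_1 = 2\pi$, and $\|h\|_{\N^q} = \|f\|_{\N^q}$. The identity $\int_\T g \cdot H_{\conj m}h\, |d\zeta| = \int_\T \conj m\, gh\, |d\zeta|$ from the proof of \thref{P:HankelBoundednessHpToBMOA}, combined with $\hil^1$-$\BMOA$ duality, yields $|\ell_m(f)| \lesssim 2\pi C_0$. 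Polynomials with $f(0) \neq 0$ are reduced to this case by splitting off the constant; subharmonicity of $\log(1+|f|)$ controls $|f(0)|$ by a function of $\|f\|_{\N^q}$, and sub-additivity bounds $\|f - f(0)\|_{\N^q} \leq 2^q \|f\|_{\N^q}$. By density of polynomials in $\N^q$, $\ell_m$ is Cauchy continuous, and Yanagihara/Stoll concludes that $m \in \mathcal{G}_{1/(1+q)}$ (respectively $\mathcal{G}_{1/2}$). The principal subtlety lies in the rescaling step of the sufficiency direction: unlike the exact Hardy factorization in \thref{L:FactorizationHp}, log sub-additivity leaves a residual cost from $\|g\|_{\N^q}$ that does not vanish with $\|h\|_{\N^q}$, and the dilation trick absorbs this cost by exploiting the linearity of $\ell_m$ against the non-homogeneous structure of the $\N^q$ metric.
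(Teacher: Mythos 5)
Your proposal is correct, but it takes a genuinely different route from the paper, and the critical step deserves a more explicit justification.

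The paper proves \thref{P:HankelGevreyBoundedness} by two separate devices. For necessity it simply observes via \eqref{E:HankelFourierSeriesRepr} that $f\mapsto \widehat{H_{\conj{m}}f}(-1)$ is a continuous linear functional on $\N^q$ (resp.\ $\N^+$) and invokes \thref{L:PrivalovDuality}. For sufficiency it establishes the absolute-convergence estimate \eqref{E:Claim1} from the coefficient bounds of \thref{L:PrivalovCoefficients}, shows directly that the operator maps into $\conj{\mathcal{G}_{1/(1+q)}}\subset\conj{\BMOA}$, and then applies the closed graph theorem for $F$-spaces. Your approach, by contrast, transplants the factorization/duality scheme of \thref{P:HankelBoundednessHpToBMOA} wholesale: you use $\hil^1$--$\BMOA$ duality, the identity $\int_\T g\,H_{\conj{m}}h\,|d\zeta|=\ell_m(gh)$, and replace the exact Hardy factorization of \thref{L:FactorizationHp} by the logarithmic sub-additivity $\log(1+uv)\le\log(1+u)+\log(1+v)$. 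What this buys is a single unified template for both \thref{P:HankelBoundednessHpToBMOA} and \thref{P:HankelGevreyBoundedness}, at the cost of having to confront the non-homogeneity of the $\N^q$ metric head-on. The paper's route avoids that confrontation entirely and, in the sufficiency direction, gets a stronger conclusion for free ($H_{\conj{m}}$ lands in the Gevrey class, not merely in $\conj{\BMOA}$).

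The one place where your write-up is too compressed is the rescaling in the sufficiency direction. Merely knowing $\|gh\|_{\N^q}\le 2^{q-1}(C_q^q+\delta)$ is a dead end: the metric ball $\{f:\|f\|_{\N^q}\le M\}$ is \emph{not} uniformly shrunk by $f\mapsto f/K$, as one sees from outer functions equal to $e^n$ on a set of measure $\sim M/n^q$ and to $1$ elsewhere. The fix is to push the dilation onto the $g$-factor and exploit the $\hil^1$ control there: $\log(1+|gh|/K)\le\log(1+|g|/K)+\log(1+|h|)$, and your own pointwise estimate $(\log(1+x))^q\le C_q^q x$ applied to $x=|g|/K$ yields
\[
\|g/K\|_{\N^q}\le \frac{C_q^q}{K}\|g\|_1\le\frac{C_q^q}{K},
\]
so that $\|gh/K\|_{\N^q}\le 2^{q-1}\bigl(C_q^q/K+\delta\bigr)$, which \emph{does} shrink as $K\to\infty$ and $\delta\to 0$. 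With this intermediate inequality made explicit, choosing $K$ from $m,q$ and then $\delta$ from $K$ gives $|\ell_m(gh)|=K|\ell_m(gh/K)|\le K C$ uniformly, as you intend. Your necessity direction (factoring $f=z\cdot(f/z)$ and splitting off the constant $f(0)$ controlled by the sub-mean value property of $\log(1+|f|)$) is fine as written. So the proposal works, but you should display the estimate $\|g/K\|_{\N^q}\lesssim \|g\|_1/K$ rather than let the reader reconstruct it, since the weaker bound $\|g\|_{\N^q}\le C_q^q$ alone would not carry the argument.
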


In fact, characterization of the continuity of Hankel operators in this case is rather insensitive to change of the range space. In $(i)$, we may replace $\conj{\BMOA}$ by $\conj{\N^q}$ itself, or any of the spaces $\conj{\hil^p}$, even for $p = \infty$, without changing the conclusion. So, for instance, membership of $m$ in the corresponding Gevrey class is equivalent to continuity of the operator    $H_{\conj{m}}:\N^q \to \conj{\hil^p}$. Similar statement holds for the operators $H_{\conj{m}}$ on $\N^+$ also. These facts can be deduced from the proofs below, but they will not be used in the sequel.

To prove \thref{P:HankelGevreyBoundedness}, we will need counterparts of \thref{L:DRSDualityTheorem}.

\begin{lem} \thlabel{L:PrivalovDuality} Let $m \in \hil^\infty$ and set \[\ell_m(f) := \int_\T f(\zeta) \conj{m(\zeta)} |d\zeta| = 2\pi \sum_{k=0}^\infty \widehat{f}(k)\conj{\widehat{m}(k)}, \quad f \in \hil^\infty.\]
\begin{enumerate}[(i)]
    \item For $p > 1$, the mapping $\ell_m$ extends to a continuous linear functional on $\N^q$ if and only if $m \in \mathcal{G}_{1/(1+q)}$.
    \item The mapping $\ell_m$ extends to a continuous linear functional on $\N^+$ if and only if $m \in \mathcal{G}_{1/2}$.
\end{enumerate}    
\end{lem}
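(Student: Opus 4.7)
Lemma~\thref{L:PrivalovDuality} is a restatement of the duality theorems of Yanagihara~\cite{yanagihara1973multipliers} for $\N^+$ and of Stoll~\cite{stoll1977mean} for $\N^q$, $q>1$; the plan is to either invoke those results directly, or else give a self-contained proof in three standard steps. The key analytic input (Step~1) is an upper bound on the Taylor coefficients of functions in the Privalov/Smirnov class: for $f\in\N^q$ with $\|f\|_{\N^q}\leq M$ one shows
\[
|\widehat f(n)| \leq C \exp\!\bigl( K\, M^{1/q}\, n^{1/(q+1)} \bigr), \qquad n\geq 0,
\]
together with the analogous $|\widehat f(n)|\leq C\exp(K\sqrt{Mn})$ in $\N^+$. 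The ingredients are Jensen's inequality $\log|f(z)|\leq \int_\T P(z,\zeta)\log^+|f(\zeta)|\,|d\zeta|$, Hölder applied together with the Poisson-kernel estimate $\|P(z,\cdot)\|_{L^{q'}(\T)}\lesssim (1-|z|)^{-1/q}$, and Cauchy's inequality $|\widehat f(n)|\leq r^{-n}\max_{|z|=r}|f(z)|$ optimised over $r\in(0,1)$ (the optimum being $1-r\sim n^{-q/(q+1)}$).

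For the sufficiency direction, given $m\in\mathcal{G}_{1/(q+1)}$ with $|\widehat m(n)|\leq C_m e^{-c_m n^{1/(q+1)}}$, I would apply Step~1 to an arbitrary $f\in\hil^\infty$ with $\|f\|_{\N^q}<\delta$, with $\delta$ chosen small enough that $K\delta^{1/q}<c_m/2$. Together with the Gevrey decay this gives
\[
|\ell_m(f)| \leq 2\pi \sum_n |\widehat f(n)|\,|\widehat m(n)| \leq C'\sum_n e^{-c_m n^{1/(q+1)}/2} < \infty,
\]
uniformly in such $f$, so $\ell_m$ is bounded on a neighbourhood of $0$ in the dense subspace $\hil^\infty\subset\N^q$; by linearity it extends to a continuous linear functional on all of $\N^q$. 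The argument for $\N^+$ is parallel, with the Gevrey exponent $1/2$ in place of $1/(q+1)$ throughout.

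Conversely, for the necessity direction I would produce a family of test functions $f_n\in\hil^\infty$ that saturate the bound of Step~1: with $\|f_n\|_{\N^q}$ uniformly bounded yet $|\widehat{f_n}(n)|\gtrsim \exp(\eta\, n^{1/(q+1)})$ (respectively $\exp(\eta\sqrt n)$ in the $\N^+$ case). Starting from an atomic singular inner measure, whose associated boundary function has Taylor coefficients involving Laguerre polynomials of precisely this growth rate in the interesting direction, one multiplies by a carefully chosen outer factor so as to remain inside $\N^+$ (respectively $\N^q$), and then uses frequency shifts and rotations to isolate the single coefficient $\widehat m(n)$ via $\ell_m$. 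Continuity then forces the Gevrey decay $|\widehat m(n)|=O(\exp(-cn^{1/(q+1)}))$. The main obstacle is exactly this construction: the naive candidate $\exp(\lambda(1+z)/(1-z))$ with $\lambda>0$ has coefficients of the right growth order, but is merely the reciprocal of a singular inner function and hence not in $\N^+$. Reconciling extremal coefficient growth with honest Smirnov/Privalov-class membership requires the delicate constructions of Yanagihara and Stoll, which I would ultimately cite rather than reprove.
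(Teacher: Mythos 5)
The paper offers no proof of this lemma at all: it simply cites Yanagihara \cite{yanagihara1973multipliers} for part (ii) and Me\v{s}trovi\'c--Pavi\v{c}evi\'c \cite{mevstrovic2003topologies} (who rely on Stoll \cite{stoll1977mean}) for part (i), which is exactly the fallback your proposal arrives at, so you and the paper take the same route. Your optional self-contained sketch is a reasonable reconstruction and correctly locates the genuine difficulty in the necessity direction (building functions in $\N^q$ or $\N^+$ with extremal coefficient growth, which is what Yanagihara's and Stoll's constructions supply, and which your Step~1 alone cannot deliver), but note the small attribution point that the $\N^q$ duality is due to Me\v{s}trovi\'c and Pavi\v{c}evi\'c building on Stoll rather than to Stoll directly, as Stoll's 1977 paper treats a related but distinct scale of spaces.
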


Part $(ii)$ is a well known result of Yanagihara from \cite{yanagihara1973multipliers}, while part $(i)$ is due to Me{\v{s}}trovi{\'c} and Pavi{\'c}evi{\'c} in \cite{mevstrovic2003topologies}, who gave a proof using earlier results of Stoll from \cite{stoll1977mean}. Moreover, in \cite{stoll1977mean} and \cite{yanagihara1973multipliers} the following elementary Taylor coefficient estimates are contained.

\begin{lem} \thlabel{L:PrivalovCoefficients}
\quad 
\begin{enumerate}[(i)]
\item For any $q > 1$ and $f \in \N^q$, we have the estimate \[ |\widehat{f}(n)| = \exp\big( o(n^{1/(1+q)})\big), \quad n \geq 0.\] 
\item For $f \in \N^+$, we have the estimate \[ |\widehat{f}(n)| = \exp\big( o(n^{1/2})\big), \quad n \geq 0.\]
\end{enumerate}
\end{lem}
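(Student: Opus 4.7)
The plan is to reduce to the Cauchy coefficient bound $|\widehat{f}(n)| \leq r^{-n} \sup_{\zeta \in \T} |f(r\zeta)|$, valid for any $r \in (0,1)$, and then control the boundary-adjacent supremum by the Poisson integral of $\log^+|f|$. For $f \in \N^+$ (in particular for $f \in \N^q$), the function $\log^+|f|$ is subharmonic in $\D$ with $L^1(\T)$ boundary values, and the Smirnov-class assumption gives the representation
\begin{equation*}
\log^+|f(re^{i\theta})| \leq \int_\T P_r(\theta - t)\log^+|f(e^{it})|\,\frac{|dt|}{2\pi},
\end{equation*}
where $P_r$ is the Poisson kernel. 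Everything now hinges on sharp estimates for this convolution, and we optimize afterward over $r \in (0,1)$.

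For part $(ii)$, given $\epsilon > 0$ I would split $\log^+|f| = g_\epsilon + h_\epsilon$ on $\T$ with $g_\epsilon$ bounded and $\|h_\epsilon\|_{L^1(\T)} < \epsilon$, such a decomposition being obtained by truncating $\log^+|f|$ at a sufficiently high level and applying dominated convergence. The $g_\epsilon$-part is controlled by its supremum norm $K_\epsilon$, while the crude bound $P_r \leq C(1-r)^{-1}$ handles the $h_\epsilon$-contribution, giving $\sup_\zeta \log|f(r\zeta)| \leq K_\epsilon + C\epsilon/(1-r)$. Combining with the Cauchy bound, writing $r = 1-\delta$, and minimizing $n\delta + C\epsilon/\delta$ at $\delta \sim \sqrt{\epsilon/n}$ yields
\begin{equation*}
|\widehat{f}(n)| \leq \exp\bigl(K_\epsilon + C'\sqrt{\epsilon n}\bigr),
\end{equation*}
and since $\epsilon > 0$ is arbitrary, this gives $|\widehat{f}(n)| = \exp(o(\sqrt{n}))$.

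For part $(i)$, the stronger integrability $\log^+|f| \in L^q(\T)$ calls for a H\"older estimate rather than a sup-norm estimate on the kernel. After the analogous truncation splitting $\log^+|f| = g_\epsilon + h_\epsilon$ with $\|h_\epsilon\|_{L^q(\T)} < \epsilon$, H\"older's inequality with conjugate exponents $q, q'$ and the elementary kernel estimate $\|P_r\|_{L^{q'}(\T)} \sim (1-r)^{-1/q}$ (obtained from the known profile $P_r(\theta) \asymp (1-r)/((1-r)^2 + \theta^2)$) give $\sup_\zeta \log|f(r\zeta)| \leq K_\epsilon + C\epsilon (1-r)^{-1/q}$. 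Minimizing $n\delta + C\epsilon\delta^{-1/q}$ in $\delta = 1-r$ yields $\delta \sim (\epsilon/n)^{q/(q+1)}$ and the bound
\begin{equation*}
|\widehat{f}(n)| \leq \exp\bigl(K_\epsilon + C'\epsilon^{q/(q+1)} n^{1/(1+q)}\bigr),
\end{equation*}
from which the assertion $|\widehat{f}(n)| = \exp(o(n^{1/(1+q)}))$ follows by letting $\epsilon \to 0$.

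The main subtle point in both parts is that to obtain the \emph{little}-$o$ conclusion (rather than only big-$O$), one must extract an arbitrarily small multiplicative constant from the $L^1$ or $L^q$ norm of the ``tail'' of $\log^+|f|$; without the splitting argument, a direct estimate only produces $\exp(O(\sqrt{n}))$ or $\exp(O(n^{1/(1+q)}))$ respectively. The remaining optimization over $r$ is a routine single-variable calculation.
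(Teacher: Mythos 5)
Your proof is correct; the paper does not prove this lemma itself, but only cites it as a classical fact from Stoll and Yanagihara, so there is no internal proof to compare against. Your argument is a clean, self-contained derivation that reconstructs essentially the standard route: the Cauchy coefficient bound $|\widehat f(n)|\leq r^{-n}\sup_{|\zeta|=1}|f(r\zeta)|$, the Poisson-majorant inequality $\log^+|f(r\zeta)|\leq (P_r*\log^+|f|)(\zeta)$ which is exactly what membership in $\N^+$ buys you (the inner factor contributes only nonpositively to $\log|f|$), the kernel estimates $\|P_r\|_\infty\lesssim(1-r)^{-1}$ and $\|P_r\|_{q'}\asymp(1-r)^{-1/q}$, and a one-variable optimization over $r=1-\delta$. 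The one genuinely important observation you make, and make correctly, is that the truncation splitting $\log^+|f|=g_\epsilon+h_\epsilon$ with $\|h_\epsilon\|$ small is what upgrades the naive big-$O$ bound to the required little-$o$: without it one only obtains $\exp(O(n^{1/(1+q)}))$, which would not suffice for the absolute convergence and closed-graph argument in Proposition~\ref{P:HankelGevreyBoundedness}. The computations check out (both parts minimize at the stated $\delta$, and both contributing terms scale like $\epsilon^{q/(q+1)}n^{1/(q+1)}$ in part (i)), so this is a valid replacement for the citation.
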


\begin{proof}[Proof of \thref{P:HankelGevreyBoundedness}]
Parts $(i)$ and $(ii)$ have the same proof. For the moment, let us assume that  $f \in \hil^\infty$ and note that \eqref{E:HankelOpDef} implies the Fourier series representation 
\begin{equation}
\label{E:HankelFourierSeriesRepr}
\widehat{H_{\conj{m}}f}(n) = 2\pi \sum_{k=0}^\infty \widehat{f}(k)\conj{\widehat{m}(k-n)} = \int_\T f(\zeta)\zeta^{|n|}\conj{m(\zeta)}  |d\zeta|, \quad n \leq -1\end{equation} and $\widehat{H_{\conj{m}}f}(n)=0$ for $n \geq 0$.  If $H_{\conj{m}}$ is a continuous operator from $\N^q$ (or $\N^+$) to a space on which the mappings $g \mapsto \widehat{g}(n)$ are continuous (in particular, this applies to $\conj{\BMOA}$), then
$f \mapsto \widehat{H_{\conj{m}}f}(-1)$ is a continuous linear functional on $\N^q$ (or $\N^+$), and from \eqref{E:HankelFourierSeriesRepr} and \thref{L:PrivalovDuality} we deduce readily that $m$ lies in the corresponding Gevrey class. 

We will prove the converse statement for case $(ii)$ , the proof for the case $(i)$ being analogous. Since we are assuming that $m \in \mathcal{G}_{1/2}$, the  series $\sum_{k=0}^\infty \widehat{f}(k)\conj{\widehat{m}(k-n)}$ converges absolutely for \textit{every} $f \in \N^+$ and $n \leq -1$. In fact, we have
\begin{equation}
\label{E:Claim1}
\sum_{k=0}^\infty \big\vert \widehat{f}(k)\conj{\widehat{m}(k-n)} \big\vert \leq A\exp\big( -d |n|^{1/2} \big) \end{equation} for some constants $A > 0$ and $d > 0$ depending only on $f$ and $m$. Accepting for a moment the claim, note that the linear operator \[f \mapsto 2\pi \sum_{n \leq -1} \Big(\sum_{k=0}^\infty \widehat{f}(k)\conj{\widehat{m}(k-n)}\Big)\conj{z^n} \] maps $\N^+$ into $\conj{\mathcal{G}_{1/2}} \subset \conj{\BMOA}$, coincides for $f \in \hil^\infty$ with our earlier definition of $H_{\conj{m}}$ by \eqref{E:HankelFourierSeriesRepr}, and it is easily seen to be closed (since the linear functionals $f \mapsto \sum_{k=0}^\infty \widehat{f}(k)\conj{\widehat{m}(k-n)}$ are continuous on $\N^+$ for each $n$, by virtue of $m \in \mathcal{G}_{1/2}$ and \thref{L:PrivalovDuality}). Since $\N^+$ and $\conj{\BMOA}$ are $F$-spaces, the usual formulation of the closed graph theorem applies (see \cite[Section 2.15]{rudin1991functional}), and we conclude that $H_{\conj{m}}: \N^+ \to \conj{\BMOA}$ is continuous.

It remains to verify the claim \eqref{E:Claim1}. Since $m \in \mathcal{G}_{1/2}$, we have that  $|\widehat{m}(k)| \leq B \exp (-3dk^{1/2})$ for some $B > 0$, $d > 0$ and every integer $k \geq 0$. By part $(ii)$ of \thref{L:PrivalovCoefficients}, there exists a positive integer $K$ such that $|\widehat{f}(k)| \leq \exp( dk^{1/2})$ for $k \geq K$. Then, for integers $n \leq -1$, we have 
\begin{align*}
\big\vert \widehat{f}(k)\conj{\widehat{m}(k-n)} \big\vert &\leq B\exp(-2d(k+|n|)^{1/2}) \\ &\leq B\exp(-d|n|^{1/2})\exp(-dk^{1/2})
\end{align*} for $k \geq K$. If $D = \max \{ |\widehat{f}(k)| : 0 \leq k < K \}$, we may now estimate 
\begin{align*}
\sum_{k=0}^\infty \big\vert \widehat{f}(k)\conj{\widehat{m}(k-n)} \big\vert &\leq DK\exp(-d|n|^{1/2}) + B\exp(-d|n|^{1/2})\sum_{k=K}^\infty \exp(-dk^{1/2}) \\
&\leq A \exp(-d|n|^{1/2}).
\end{align*} We have verified \eqref{E:Claim1}, and so the proof is complete (in the proof of case $(i)$, the above estimates are all valid with the exponent $1/2$ replaced by $1/(1+q)$).
\end{proof}

\section{Pythagorean factorizations}\label{S:PythagoreanSection}

To each symbol $b$ we have associated the quotient $h = b/a \in \N^+$. This process can be reversed, and so every $h \in \N^+$ can be uniquely expressed as a quotient of Pythagorean mates. In this section, we prove a stability result for this factorization.

\subsection{Pythagorean factorization and their stability}

Let $h \in \N^+$ and let $I$ be the inner factor of $h$. Set $a$ to be the unique outer function satisfying $a(0) > 0$ and \begin{equation}
    \label{E:AbdryValH} |a|^2 = \frac{1}{|h|^2 + 1}
\end{equation} almost everywhere on $\T$. Then there exists a unimodular scalar $c$ and a unique outer function $b_o$ which satisfies $b_o(0) > 0$, \[ |b_o|^2 = \frac{|h|^2}{|h|^2 + 1}\] and \[h(z) = \frac{cI(z)b_o(z)}{a(z)} = \frac{b(z)}{a(z)}, \quad z \in \D.\] We say that $h = b/a$ is the \textit{Pythagorean factorization} of $h$. Clearly $b$ and $a$ are Pythagorean mates, in the sense that the equality $|b|^2 + |a|^2 = 1$ holds almost everywhere on $\T$. 

We will need a convergence result for mates in the factorizations.

\begin{prop}
    \thlabel{P:PythFactorStabProp} Assume that the sequence of functions $\{h_n\}_n$ converges to the function $h$ in the metric topology on $\N^+$. If $h_n = b_n/a_n$ and $h = b/a$ are the corresponding Pythagorean factorizations, then there exists a subsequence $\{n_k\}_k$ such that
    \[ \lim_{k \to \infty} a_{n_k}(\zeta) = a(\zeta)\] and
    \[ \lim_{k \to \infty} b_{n_k}(\zeta) = b(\zeta)\] for almost every $\zeta \in \T$.
\end{prop}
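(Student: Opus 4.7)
The plan is to reduce everything to pointwise almost-everywhere convergence of $h_n$ on $\T$, then to propagate this convergence to the outer factors $a_n$, and finally to deduce convergence of $b_n$ from the identity $b_n = h_n a_n$.

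First, convergence in the metric \eqref{E:SmirnovMetricDef} immediately implies convergence in measure on $\T$, so we may pass to a subsequence (still denoted $\{h_n\}$) along which $h_n \to h$ pointwise almost everywhere on $\T$. In particular $|a_n|^2 = 1/(1+|h_n|^2) \to 1/(1+|h|^2) = |a|^2$ a.e.\ on $\T$. The challenge is to upgrade this convergence of moduli to convergence of the outer functions $a_n$ themselves, and it is here that all the work lies.

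The core step is to prove the $L^1(\T)$-convergence $\log(1+|h_n|^2) \to \log(1+|h|^2)$. The key technical input is the elementary pointwise inequality $1+|h_n|^2 \leq (1+|h|)^2(1+|h_n-h|)^2$, which yields the domination
\[0 \leq \log(1+|h_n|^2) \leq 2\log(1+|h|) + 2\log(1+|h_n-h|).\]
The right-hand side converges in $L^1(\T)$ to $2\log(1+|h|)$, using the estimate $|\log(1+|h_n|) - \log(1+|h|)| \leq \log(1+|h_n - h|)$ and the hypothesis $\int_\T \log(1+|h_n-h|)\, |d\zeta| \to 0$. A generalized dominated convergence theorem (if $0 \leq f_n \leq \phi_n$ with $f_n \to f$ a.e.\ and $\phi_n \to \phi$ in $L^1$, then $f_n \to f$ in $L^1$) then produces the claimed convergence, equivalently $\log|a_n| \to \log|a|$ in $L^1(\T)$. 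Plugging this into the Herglotz-type representation of the outer function,
\[a_n(z) = \exp\Big(\int_\T \frac{\zeta+z}{\zeta-z} \log|a_n(\zeta)|\, \frac{|d\zeta|}{2\pi}\Big), \quad z \in \D,\]
and using that the kernel is bounded on $\T$ for each fixed $z \in \D$, gives pointwise convergence $a_n(z) \to a(z)$ throughout $\D$.

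Pointwise convergence in $\D$ combined with the uniform bound $|a_n| \leq 1$ yields weak convergence $a_n \to a$ in $\hil^2$, and dominated convergence from $|a_n|^2 \to |a|^2$ a.e.\ on $\T$ (with $|a_n|^2 \leq 1$) gives $\|a_n\|_2 \to \|a\|_2$; in a Hilbert space these together force strong convergence $a_n \to a$ in $\hil^2$. A further subsequential extraction then delivers $a_n \to a$ pointwise a.e.\ on $\T$. Since $b_n = h_n a_n$ as identities in $\D$ and hence for boundary values, we conclude $b_n \to b$ a.e.\ on $\T$ along the same subsequence. The main obstacle is the $L^1$-convergence of the logarithms: only $L^1$-type integrability of $\log(1+|h|)$ is available, so classical dominated convergence with a fixed majorant is not applicable, and one must instead use a variable majorant that itself converges in $L^1$; the hypothesis $h_n \to h$ in $\N^+$ supplies such a majorant in just the right form.
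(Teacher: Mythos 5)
Your proof is correct. The central step, namely the $L^1(\T)$-convergence of $\log|a_n|$ to $\log|a|$, is handled with essentially the same key inequality as the paper: you bound $\log(1+|h_n|^2)$ above by $2\log(1+|h|) + 2\log(1+|h_n-h|)$, which upon dividing by $2$ is exactly the paper's majorant $\log(1+|h|) + \log(1+|h_n-h|)$ for $\bigl|\log|a_n|\bigr|$. You then invoke a generalized dominated convergence theorem (Pratt's lemma), while the paper phrases the same argument in the language of uniform integrability; these are interchangeable and lead to the same conclusion.

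Where you genuinely diverge from the paper is in passing from $\log|a_n| \to \log|a|$ in $L^1(\T)$ to almost-everywhere convergence of $a_n$ on $\T$. The paper uses the representation $\log a_n = 2P_+(\log|a_n|) - \widehat{\log|a_n|}(0)$ together with the (nontrivial) continuity of the Riesz/Cauchy projection $P_+ : L^1(\T) \to \hil^p$ for $p \in (0,1)$, so that $L^1$-convergence of the logarithms gives $\hil^p$-convergence of $\log a_n$, hence a.e.\ convergence along a subsequence. You instead first get pointwise convergence $a_n(z) \to a(z)$ inside $\D$ from the Herglotz formula (using only that the kernel is bounded for fixed $z$), upgrade this to weak $\hil^2$-convergence via boundedness, obtain $\|a_n\|_2 \to \|a\|_2$ by ordinary dominated convergence from $|a_n|^2 \to |a|^2$ a.e., deduce strong $\hil^2$-convergence from weak convergence plus norm convergence, and finally extract an a.e.\ convergent subsequence on $\T$. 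Your route is somewhat longer but uses only elementary Hilbert-space facts and avoids appealing to the weak-type $(1,1)$ / Kolmogorov-Smirnov boundedness of the Cauchy projection; the paper's route is shorter but rests on that deeper estimate. Either way one lands on the same conclusion, and the final passage $b_n = h_n a_n \to h a = b$ a.e.\ is identical.
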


We prove \thref{P:PythFactorStabProp} after a brief review of basic properties of outer functions.

\subsection{Pointwise convergence of outer functions on the boundary}
Recall that if $a$ is an outer function satisfying $a(0) > 0$, and we set $g(\zeta) = \log |a(\zeta)|$, then we have the representation formula \[ \log a(z) = \frac{1}{2\pi}\int_\T \frac{\zeta + z}{\zeta - z} g(\zeta) |d\zeta|  = \widehat{g}(0) + 2 \sum_{n \geq 1} \widehat{g}(n) z^n = 2 P_+g(z) - \widehat{g}(0), \quad z \in \D.\]
For any $p \in (0,1)$ the operator $P_+$ is continuous from $L^1(\T)$ into $\hil^p$ (see, for instance, \cite[Theorem 2.1.10]{cauchytransform}). Thus if $g_n(\zeta) = \log |a_n(\zeta)|$ and $g_n \to g$ in $L^1(\T)$, then after passing to a subsequence we can ensure the pointwise convergence 
\[ \lim_n \log a_n(\zeta) = \log a(\zeta)\] for almost every $\zeta \in \T$. After exponentiating this translates into 
\[ \lim_n a_n(\zeta) = a(\zeta)\] for almost every $\zeta \in \T$. The proof of \thref{P:PythFactorStabProp} is thus reduced to showing that the convergence $h_n = b_n/a_n \to h = b/a$ in $\N^+$ implies that $\log |a_{n_k}| \to \log |a|$ in $L^1(\T)$ along some subsequence $\{n_k\}_k$. For if this is the case, then since $h_n \to h$ in $\N^+$ implies by \eqref{E:SmirnovMetricDef} that
\begin{equation}
\label{E:hnhNConvergenceEq}
\lim_{n \to \infty} \int_\T \log(1 + |h_n-h|) |d\zeta| = 0,
\end{equation} 
basic measure theory ensures that the subsequence can be refined to ensure almost everywhere pointwise convergence $h_{n_k}(\zeta) \to h(\zeta)$. By the above reasoning we can ensure also the convergence $a_{n_k}(\zeta) \to a(\zeta)$ almost everywhere, and then it follows that $b_{n_k}(\zeta) \to b(\zeta)$ almost everywhere.

\subsection{Proof of the stability result}

The sought-after result now follows in a standard way from uniform integrability (see \cite[page 133]{rudinrealcomplex}, for instance). Recall that a sequence $\{g_n\}_n$ of functions in $L^1(\T)$ is said to be uniformly integrable if
\[ \lim_{\delta \to 0} \sup_{E : |E| \leq \delta} \int_E |g_n| |d\zeta| = 0\] holds uniformly in $n$. An elementary argument shows that if $g_n \to g$ almost everywhere on $\T$, then uniform integrability of the sequence $\{g_n\}_n$ is equivalent to convergence $g_n \to g$ in $L^1(\T)$.

\begin{proof}[Proof of \thref{P:PythFactorStabProp}] 
According to the above discussion, it suffices for us to show that $\log |a_n| \to \log |a|$ in $L^1(\T)$. Since $h_n \to h$ in $\N^+$, after passing to a subsequence, we may suppose that $h_n(\zeta) \to h(\zeta)$ almost everywhere on $\T$. Recalling \eqref{E:AbdryValH}, this implies that
\[ \lim_n |a_n(\zeta)|^2 = \lim_n \frac{1}{|h_n(\zeta)|^2 + 1} = \frac{1}{|h(\zeta)|^2 + 1} = |a(\zeta)|^2 \] almost everywhere on $\T$. Taking logarithms, we obtain $\log |a_n(\zeta)| \to \log |a(\zeta)|$ almost everywhere on $\T$. 

Note that $1/|a|^2 = |h_n|^2 + 1$ on $\T$ and recall that $\sqrt{x+1} \leq \sqrt{x} + 1$ for $x \geq 0$. Use also that the logarithm is increasing to see that on $\T$ we have 

\begin{align*}
\big\vert \log |a_n|\big\vert   &= \log \big( \sqrt{|h_n|^2 + 1}\big) \\
&\leq \log\big( |h_n| + 1 \big) \\
&\leq \log \big(1 + |h_n-h|\big) + \log \big(1 + |h|\big).
\end{align*}
Since \eqref{E:hnhNConvergenceEq} holds, the sequence $\{\log(1+|h_n-h|)\}_n$ converges in $L^1(\T)$ (to $0$), and so is uniformly integrable. But then the above inequalities show that $\{ \log |a_n| \}_n$ is uniformly integrable on $\T$, and since $\log |a_n| \to \log |a|$ almost everywhere on $\T$, we have also $\log |a_n| \to \log |a|$ in $L^1(\T)$. By the initial remarks, the proof is complete.
\end{proof}

\section{Proof of the main theorems} \label{S:MainTheoremProofSection}

\subsection{A general criterion} \label{S:GeneralCriterionSection}

\thref{T:MainTheorem1} and \thref{T:MainTheorem2} will follow from our earlier developments as corollaries of a general statement which we shall now state, and prove next. 

In this section, we let $X$ be a topological space of analytic functions on $\D$. The properties of $X$ which we shall need in our proof are as follows.

\begin{enumerate}[(i)]
\item $X$ is an $F$-space.
\item $X$ is continuously contained in the Smirnov class $\N^+$.
\item $\hil^\infty$ is dense in $X$. 
\item The multiplication operator $f \mapsto \varphi f$ is continuous on $X$ for each $\varphi \in \hil^\infty$.
\item If $h \in X$ has Pythagorean factorization $h = b/a$, then $1/a \in X$.
\end{enumerate}

It is known that the spaces appearing in theorems stated in the Introduction, namely $\hil^p$, $\N^q$ and $\N^+$, all satisfy the above five conditions. See, for instance, \cite{garnett}, \cite{PrivalovBook} and \cite{stoll1977mean}. The fifth property is easily seen by recalling that $1/|a|^2 = |h|^2 + 1$ on $\T$.

Our general result will apply to any space $X$ satisfying the above five conditions. To such $X$ we associate two sets of functions. As before, we let $\mathcal{F}(X)$ consist of non-extreme symbols $b$ with Pythagorean mate $a$ for which $b/a$ is a member of $X$, and we let $H(X, \conj{\BMOA})$ consist of those analytic functions $m \in \hil^\infty$ for which the densely defined Hankel operator $H_{\conj{m}}$ in \eqref{E:HankelOpDef} extends to a continuous mapping from $X$ into $\conj{\BMOA}$. 

\begin{prop} \thlabel{P:MainProp}
Let $X$ be a topological space of analytic functions on $\D$ satisfying properties $(i)-(v)$ stated above. Then the universal multipliers for $\mathcal{F}(X)$ coincide with the symbols of continuous Hankel operators $H_{\conj{m}}: X \to \conj{\BMOA}$. That is, we have the equality
\[ \bigcap_{b \in \mathcal{F}(X)} \Mult{\hb} = H(X, \conj{\BMOA}). \]
\end{prop}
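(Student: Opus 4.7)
Both containments lean on the formal identity
\[
H_{\conj m}(b/a) \;=\; \conj{m_+} - \conj{m_+(0)}, \qquad b \in \mathcal F(X),
\]
valid whenever $m \in \hb$ with mate $m_+$. This follows from the mate equation $T_{\conj b}m = T_{\conj a}m_+$ by unpacking $b\conj m - a\conj{m_+} = g \in z\hil^2$ and dividing by $a$: the quotient $g/a$ lies in $\N^+$ and is therefore analytic, so $P_-$ annihilates it, yielding the displayed identity.

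\textbf{Direction $\bigcap \Mult{\hb} \subseteq H(X, \conj\BMOA)$.} Property (iv) closes $\mathcal F(X)$ under $b \mapsto Ib$ for inner $I$ (the mate $a$ is unchanged and $Ib/a = Ih \in X$), so \thref{C:LottoSarasonCor2} yields $m_+^{(b)} \in \BMOA$ for every $b \in \mathcal F(X)$. Define $T : X \to \conj\BMOA$ by $T(h) := \conj{m_+^{(b_h)}} - \conj{m_+^{(b_h)}(0)}$ using the Pythagorean factorization $h = b_h/a_h$ from Section~\ref{S:PythagoreanSection}. The identity shows $T = H_{\conj m}$ on $\hil^\infty$. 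Continuity of $T$ follows from the closed graph theorem: given $h_n \to h$ in $X$ and $Th_n \to g$ in $\conj\BMOA$, \thref{P:PythFactorStabProp} delivers a subsequence along which $a_{n_k} \to a$ and $b_{n_k} \to b$ a.e. on $\T$, hence in $L^2$ (using $|a_n|,|b_n| \leq 1$). The $\BMOA$-convergence of $Th_{n_k}$ forces the mates $m_+^{(n_k)}$ to converge modulo constants, and using the mate equation $T_{\conj{b_{n_k}}}m = T_{\conj{a_{n_k}}}m_+^{(n_k)}$ in $\hil^2$ pins down the constant shifts; passing to the limit in this equation and invoking uniqueness of the mate identifies the limit with $m_+^{(b)}$, so $g = T(h)$.

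\textbf{Direction $H(X, \conj\BMOA) \subseteq \bigcap \Mult{\hb}$.} Fix $b \in \mathcal F(X)$. By \thref{C:LottoSarasonCor1}, it suffices to produce $u \in \BMOA$ with $m = T_{\conj a}u$. Property (v) gives $1/a \in X$, so $u_0 := \conj{H_{\conj m}(1/a)} \in \BMOA$ is defined through the continuous extension of $H_{\conj m}$ from $\hil^\infty$. Formally $u_0 = P_+(m/\conj a) - \mathrm{const}$ (via~\eqref{E:PplusPminusEq}), and the mode-arithmetic observation that $\conj a \cdot P_-(m/\conj a) \in \conj{z\hil^2}$ (because $\conj a$ has modes $\leq 0$ and $P_-(m/\conj a)$ modes $\leq -1$) yields $T_{\conj a}P_+(m/\conj a) = m$. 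I rigorize by approximation: for $\varphi_n \in \hil^\infty$ with $\varphi_n \to 1/a$ in $X$, the identity
\[
T_{\conj a}\,\conj{H_{\conj m}\varphi_n} \;=\; T_{\conj{a\varphi_n}}m - \conj{a(0)}\,\conj{\widehat{\conj m \varphi_n}(0)}
\]
holds by the same mode-arithmetic and the commutation of co-analytic Toeplitz operators. The left side converges to $T_{\conj a}u_0$ in $\hil^2$; comparing the positive-index Fourier coefficients of the right side against those of $m$ (which match in the limit through $\ell^2$-summability of $\{\conj{a_j}\}$ paired with the convergent Fourier-coefficient functionals on $\BMOA$) forces $m - T_{\conj a}u_0$ to be a constant. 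Setting $u := u_0 + c$ for the appropriate $c$ (well-defined because $a(0) > 0$) then completes the construction.

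\textbf{Main obstacle.} The delicate step is in the second direction: rigorously identifying the $\hil^2$-limit on the right side of the displayed identity. Although $a\varphi_n \to 1$ in $X$ by property (iv), $X$-convergence is typically much weaker than $\hil^2$-convergence of $T_{\conj{a\varphi_n}}m$, so the identification cannot proceed directly but must be extracted from the Hankel boundedness itself---through the fact that each Fourier coefficient of $H_{\conj m}\varphi$ is a continuous linear functional on $X$ controlled by $\|H_{\conj m}\|_{X \to \conj\BMOA}$, which assembles into the needed limit via the $\ell^2$-summability of the coefficient sequence of $a$.
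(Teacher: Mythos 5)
Your architecture is the same as the paper's (the two Lotto--Sarason corollaries, the Pythagorean stability result, Toeplitz/Hankel identities via \eqref{E:PplusPminusEq}, and the closed graph theorem), but as written there are two genuine gaps. The first is in the direction ``universal multiplier $\Rightarrow$ Hankel continuity'': you invoke the closed graph theorem for your map $T(h)=\conj{m_+^{(b_h)}}-\conj{m_+^{(b_h)}(0)}$ without proving that $T$ is \emph{linear} on $X$, and the closed graph theorem between $F$-spaces applies only to linear maps. Linearity is not obvious here, because $m_+(h)$ is defined through the Pythagorean factorization $h=b/a$, which depends nonlinearly on $h$; and agreeing with the linear operator $H_{\conj m}$ on the dense subspace $\hil^\infty$ does not yield linearity on all of $X$ unless you already have the continuity you are trying to prove. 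This is exactly what the paper's \thref{L:TLinearity} supplies, by clearing denominators ($a_1a_2b=\lambda a a_2 b_1 + a a_1 b_2$), rewriting everything with \eqref{E:ToeplitzHomRelation}, and using injectivity of the co-analytic Toeplitz operator with outer symbol $aa_1a_2$. Relatedly, in your closedness argument the constants $m_+^{(n_k)}(0)$ have to be shown to converge before you can pass to the limit in the mate equation; this is fixable (compare zeroth coefficients and use $a_{n_k}(0)\to a(0)>0$, or work with $m_+$ itself and an $L^4$--H\"older estimate for $\To_{\conj{a-a_{n_k}}}m_+(h_{n_k})$, as the paper does), but it is not automatic.

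The second gap is the one you yourself flag, and the mechanism you sketch does not close it. Continuity of the coefficient functionals of $H_{\conj m}\varphi$ on $X$, paired with $\widehat a\in\ell^2$, only re-derives that the right-hand side $\To_{\conj{a\varphi_n}}m-\conj{a(0)}\,\conj{\widehat{\conj m\varphi_n}(0)}$ converges to $\To_{\conj a}u_0$ --- which you already know, since it equals the left-hand side --- and does not identify $\lim_n\To_{\conj{a\varphi_n}}m$ with $m$ modulo constants; the interchange of limits in $\sum_{k}\conj{\widehat{(a\varphi_n)}(k)}\,\widehat m(k+j)$ is not justified by $X$-convergence, which gives no uniform decay of $\widehat{(a\varphi_n)}(k)$ in $n$. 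The missing ingredient is to apply the continuity of $H_{\conj m}$ to the \emph{products}: by property $(iv)$, $a\varphi_n\to 1$ in $X$, and \eqref{E:PplusPminusEq} gives $\To_{\conj{a\varphi_n}}m=\conj{H_{\conj m}(a\varphi_n)}+c_n$ with $H_{\conj m}(a\varphi_n)\to H_{\conj m}1=\conj{m-m(0)}$ in $\conj{\BMOA}$; then the constants $c_n$ converge automatically, $m-\To_{\conj a}u_0$ is a constant, and you absorb it using $a(0)>0$ before applying \thref{C:LottoSarasonCor1}. This is precisely the maneuver of Section \ref{S:ProofSecPart1} of the paper (performed there with an extra factor of $z$). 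Finally, note that your opening ``formal identity'' for general $b\in\mathcal F(X)$ involves $P_-$ applied to functions that need not be integrable on $\T$; it is legitimate only on $\hil^\infty$ (which is all you actually use), so state it there.
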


\thref{T:MainTheorem1}, \thref{T:MainTheorem2} and the Davis-McCarthy Theorem follow immediately from \thref{P:MainProp} and the characterization of continuous Hankel operators in \thref{P:HankelBoundednessHpToBMOA} and \thref{P:HankelGevreyBoundedness}.

We proceed with the proof of \thref{P:MainProp}.

\subsection{Toeplitz operators} \label{S:ToeplitzSubsec} For notational convenience, we shall use the Toeplitz operators \begin{equation}
    \label{E:ToeplitzDef}
    \mathcal{T}_g : \hil^2 \to \hil^2, \quad f \mapsto P_+(\conj{g}f).
\end{equation} Here $g \in L^\infty(\T)$ is the symbol of the operator. The following commutation relation is readily established and will be used frequently: 
\begin{equation}
    \label{E:ToeplitzHomRelation}
\To_{\conj{g_1g_2}} = \To_{\conj{g_1}}\To_{\conj{g_2}} = \To_{\conj{g_2}}\To_{\conj{g_1}}, \quad g_1, g_2 \in \hil^\infty. \end{equation}

If $g \in \hil^\infty$, then the operator $T_{\conj{g}}$ is bounded from $\BMOA$ into itself. To see this, we may simply observe that $\To_{\conj{g}}$ is the Banach space adjoint of the operator of multiplication by $g$ on $\hil^1$, with respect to the $\hil^1- \BMOA$ duality pairing in \eqref{E:H1BMODualityEq}. Moreover, if $g$ is an outer function, then the operator $\To_{\conj{g}}$ is injective on $\hil^2$.

\subsection{From Hankel continuity to universal multiplier} \label{S:ProofSecPart1}
We will first prove that if $m \in H(X, \conj{\BMOA})$, then $m$ is a universal multiplier for the family $\mathcal{F}(X)$. Let $b \in \mathcal{F}(X)$, so that $b/a \in X$. By properties $(iii)$ and $(v)$ in Section \ref{S:GeneralCriterionSection}, there exists a sequence $\{h_n\}_n$ of functions in $\hil^\infty$ such that $h_n \to 1/a$ in the topology of $X$. By property $(iv)$, we have $ah_n \to 1$ in $X$. Now, $H_{\conj{m}}(a^{-1}) \in \conj{\BMOA}$, and this function vanishes at $z = 0$. Say, \[H_{\conj{m}}(a^{-1}) = P_-(\conj{m} a^{-1}) = \conj{zu},\] where $u \in \BMOA$. Using the identity \eqref{E:PplusPminusEq} and continuity of $H_{\conj{m}}$ we obtain 
\begin{equation}
    zu = \lim_{n} \conj{ H_{\conj{m}} h_n} = \lim_n P_+ (\conj{h_n}m) + c_n = \lim_n \To_{\conj{h_m}}m + c_n,
\end{equation} with convergence in the sense of the norm on $\BMOA$ and where $c_n$ are constants. Applying the operator $\To_{\conj{z}}$ (here $z$ denotes the identity function on $\T$) and using the relation \eqref{E:ToeplitzHomRelation}, we arrive at \[ u = \lim_n \To_{\conj{z h_n}} m.\]  Applying also the Toeplitz operator $\To_{\conj{a}}$, which, as mentioned above, is continuous on $\BMOA$, and using \eqref{E:PplusPminusEq}, we obtain
\begin{align*}
    \To_{\conj{a}} u & = \lim_n \To_{\conj{zah_n}} m\\
    &= \lim_n P_+( \conj{az h_n} m) \\
    &= \lim_n \conj{H_{\conj{m}}(azh_n)} + \lim_n c'_n
\end{align*} where $c'_n$ are constants. Since $ah_n \to 1$ in $X$, the continuity of the Hankel operator $H_{\conj{m}}$ implies that \[H_{\conj{m}}(azh_n) \to H_{\conj{m}}z = \frac{\conj{m - m(0)}}{\conj{z}} - \conj{m'(0)}\] in $\conj{\BMOA}$. We must then also have that $\lim_n c'_n = c'$ for some constant $c'$. Finally, we obtain that 
\[ \To_{\conj{a}}u = \frac{m - m(0)}{z} - m'(0) + c'.\] Since $u \in \BMOA$, we obtain by \thref{C:LottoSarasonCor1} that $\frac{m - m(0)}{z} - m'(0) + c' \in \Mult{\hb}$. It follows that $m \in \Mult{\hb}$.

We have therefore proved that the right-hand side is included in the left-hand side in the asserted equality in the statement of \thref{P:MainProp}.

\subsection{From universal multiplier to Hankel continuity}

It takes considerably more effort to prove that the universal multiplier property of $m$ for the family $\mathcal{F}(X)$ implies that $m \in H(X,\conj{\BMOA})$. We start with a few observations.

If $m$ is a universal multiplier for the family $\mathcal{F}(X)$, then for any $h = b/a \in X$ in particular we have $m \in \hb$, and so a unique mate $m_+(h) \in \hil^2$ exists which satisfies the operator equation 
\begin{equation}
    \label{E:MateEquationHb}
    \To_{\conj{b}}m= \To_{\conj{a}}m_+(h).
\end{equation} 

The use of notation $m_+(h)$ in favor of the more natural $m_+(b)$ is a conscious choice, as we will soon see that $h \mapsto \conj{m_+(h)}$ is a linear function. Let us note that our assumptions force $m_+(h)$ to lie in $\BMOA$. By property $(iv)$ in Section \ref{S:GeneralCriterionSection}, if $b/a \in X$ and $I$ is an inner function, then $Ib/a \in X$. But then $Ib \in \mathcal{F}(X)$, since $a$ is the Pythagorean mate of $Ib$. Hence $\mathcal{F}(X)$ satisfies the hypothesis of \thref{C:LottoSarasonCor2}. Thus the mapping
\begin{equation} \label{E:TOpeatorDef}
\conj{m_+} : X \to \conj{\BMOA}, \quad h = b/a \, \mapsto \, \conj{m_+(h)} 
\end{equation} is well-defined. Our task will be to show that $\conj{m_+}$ is linear and continuous. At the end of our development, we will note that $\conj{m_+}$ is a rank one perturbation of the operator $H_{\conj{m}}: X \to \conj{\BMOA}$.

We start by establishing linearity.

\begin{lem}
    \thlabel{L:TLinearity}
    For $h_1, h_2 \in X$ and a scalar $\lambda \in \mathbb{C}$, we have \[\conj{m_+(\lambda h_1 + h_2)} = \lambda \conj{m_+(h_1)} + \conj{m_+(h_2)}.\]
\end{lem}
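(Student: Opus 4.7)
The plan is to exploit the uniqueness of the mate. The asserted identity is equivalent to
\begin{equation*}
m_+(\lambda h_1 + h_2) = \conj{\lambda}\, m_+(h_1) + m_+(h_2).
\end{equation*}
Let $(a_i, b_i)$ denote the Pythagorean pair of $h_i$, $i=1,2$, and let $(a,b)$ be the Pythagorean pair of $h := \lambda h_1 + h_2$. Set $g := \conj{\lambda}\, m_+(h_1) + m_+(h_2) \in \hil^2$. Since $a$ is outer, $\To_{\conj{a}}$ is injective on $\hil^2$, so the mate $m_+(h)$ is characterized uniquely in $\hil^2$ by the mate equation $\To_{\conj{a}}\, m_+(h) = \To_{\conj{b}}\, m$. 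Thus it suffices to verify that $g$ satisfies this same equation, i.e.
\begin{equation*}
\int_\T g\,\conj{a\phi}\,|d\zeta| \;=\; \int_\T m\,\conj{b\phi}\,|d\zeta|, \qquad \phi \in \hil^2.
\end{equation*}

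Both sides are continuous linear functionals of $\phi \in \hil^2$, so it is enough to verify the equality on a dense subset. The crucial choice is $\phi = a_1 a_2 \tilde\phi$ with $\tilde\phi \in \hil^2$. Density of $a_1 a_2 \hil^2$ in $\hil^2$ follows because a product of outer functions is outer, and $g\cdot \hil^2$ is dense in $\hil^2$ whenever $g$ is outer. For such $\phi$, one writes $a\phi = a_1(a a_2 \tilde\phi) = a_2(a a_1 \tilde\phi)$, which is precisely the form required to invoke the mate equations of $h_1$ and $h_2$ against the $\hil^2$ test functions $a a_2 \tilde\phi$ and $a a_1 \tilde\phi$. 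After pulling the scalar $\conj{\lambda}$ under the conjugate and applying the two mate equations, one obtains
\begin{equation*}
\int_\T g\,\conj{a\phi}\,|d\zeta| \;=\; \int_\T m\,\conj{a(\lambda b_1 a_2 + a_1 b_2)\,\tilde\phi}\,|d\zeta|.
\end{equation*}

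The final step is the elementary algebraic identity
\begin{equation*}
b\, a_1 a_2 \;=\; a(\lambda b_1 a_2 + a_1 b_2),
\end{equation*}
which follows from $b = ah = \lambda a(b_1/a_1) + a(b_2/a_2)$ by clearing denominators. Substituting this identity rewrites the right-hand integrand as $m\,\conj{b a_1 a_2 \tilde\phi} = m\,\conj{b\phi}$, which completes the verification. The only subtle point — and the real content of the argument — is the choice of dense test set: the three outer functions $a, a_1, a_2$ are distinct and the quotients $a/a_i$ need not be bounded, so one cannot manipulate the three mate equations against a common $\hil^2$ function directly; restricting $\phi$ to $a_1 a_2 \hil^2$ is precisely the symmetric modification that lets both mate equations be applied in tandem while preserving density in $\hil^2$.
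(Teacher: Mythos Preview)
Your proof is correct and follows essentially the same route as the paper's: both hinge on the algebraic identity $b\,a_1a_2 = a(\lambda b_1 a_2 + a_1 b_2)$ and on multiplying through by the outer function $a_1 a_2$ to make the three mate equations compatible. The paper phrases this via the Toeplitz commutation relation $\To_{\conj{g_1 g_2}} = \To_{\conj{g_1}}\To_{\conj{g_2}}$ and injectivity of $\To_{\conj{aa_1a_2}}$, whereas you express the same idea dually by testing against the dense subset $a_1 a_2 \hil^2 \subset \hil^2$; these are two formulations of the same argument.
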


\begin{proof} Let $h = \lambda h_1 + h_2$ and consider the Pythagorean factorizations 
\[ h_1 = \frac{b_1}{a_1}, \quad h_2 = \frac{b_2}{a_2}, \quad h = \frac{b}{a}.\] We have the identity
\[a_1a_2b = \lambda aa_2b_1 + aa_1b_2,\] and so
\begin{equation}
    \label{E:TLinearyProofEq}
    \To_{\conj{a_1a_2b}}m = \conj{\lambda} \To_{\conj{aa_2b_1}}m + \To_{\conj{aa_1b_2}}m.
\end{equation}
The relation \eqref{E:ToeplitzHomRelation} gives \[ \To_{\conj{a_1a_2b}} m = \To_{\conj{a_1a_2}}\To_{\conj{b}}m = \To_{\conj{a_1a_2}}\To_{\conj{a}}m_+(h) = \To_{\conj{aa_1a_2}}m_+(h).\] Similarly \[ \To_{\conj{aa_2b_1}}m = \To_{\conj{aa_1a_2}}m_+(h_1)\] and \[ \To_{\conj{aa_1b_2}}m = \To_{\conj{aa_1a_2}}m_+(h_2).\] Inputting these equalities into the relation \eqref{E:TLinearyProofEq} and rearranging, we obtain
\[ \To_{\conj{aa_1a_2}}\big(m_+(h) - \conj{\lambda} m_+(h_1) - m_+(h_2)\big) = 0.\] Because $aa_1a_2$ is an outer function, the above equality implies that $m_+(h) - \conj{\lambda} m_+(h_1) - m_+(h_2) = 0$, and so the proof is complete.
\end{proof}

We treat continuity next.

\begin{lem}
    \thlabel{L:TContinuity}
    The linear operator $\conj{m_+}: X \to \conj{\BMOA}$ is continuous.
\end{lem}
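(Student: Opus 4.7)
The plan is to invoke the closed graph theorem. Both $X$ (by property (i)) and $\conj{\BMOA}$ are F-spaces, and \thref{L:TLinearity} gives linearity of $\conj{m_+}$, so it suffices to prove that the graph is closed. Concretely, I will assume that $h_n \to h$ in $X$ and $\conj{m_+(h_n)} \to g$ in $\conj{\BMOA}$, and show $g = \conj{m_+(h)}$. The mate equation
\[\To_{\conj{b_n}}m = \To_{\conj{a_n}}m_+(h_n)\]
holds in $\hil^2$ for each $n$, and the strategy is to pass to the limit in $\hil^2$ and use the uniqueness of the mate to identify $\conj{g}$ with $m_+(h)$.

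The key inputs are \thref{P:PythFactorStabProp} and dominated convergence. Since $X \hookrightarrow \N^+$ continuously by property (ii), the hypotheses of \thref{P:PythFactorStabProp} are satisfied, and after passing to a subsequence (which is harmless since the limit $g$ is already determined) I may assume $a_n(\zeta) \to a(\zeta)$ and $b_n(\zeta) \to b(\zeta)$ for almost every $\zeta \in \T$. Because $|a_n|, |b_n| \leq 1$ almost everywhere, these pointwise convergences upgrade to $L^2$ convergence against any fixed $L^2$ function. In particular, writing $\To_{\conj{b_n}}m - \To_{\conj{b}}m = P_+(\conj{b_n - b}\,m)$ and using that $m \in \hil^\infty$ together with boundedness of $P_+$ on $L^2$, the left-hand side of the mate equation converges to $\To_{\conj{b}}m$ in $\hil^2$.

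For the right-hand side I split
\[\To_{\conj{a_n}}m_+(h_n) - \To_{\conj{a}}\conj{g} = P_+\bigl(\conj{a_n}(m_+(h_n) - \conj{g})\bigr) + P_+\bigl(\conj{a_n - a}\,\conj{g}\bigr).\]
The contractivity of $P_+$ on $L^2$ gives a continuous inclusion $\BMOA \hookrightarrow \hil^2$, so $\conj{m_+(h_n)} \to g$ in $\conj{\BMOA}$ implies $m_+(h_n) \to \conj{g}$ in $\hil^2$; combined with $\|a_n\|_\infty \leq 1$, the first term tends to zero in $\hil^2$. The second term tends to zero by dominated convergence applied to $(a_n - a)\conj{g}$ (using $\conj{g} \in L^2$ and $|a_n - a| \leq 2$) followed by boundedness of $P_+$. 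Thus in the limit $\To_{\conj{b}}m = \To_{\conj{a}}\conj{g}$; by uniqueness of the mate (recalled in the Introduction), $\conj{g} = m_+(h)$, which is exactly what was needed. The closed graph theorem for F-spaces (see \cite[Section 2.15]{rudin1991functional}) then yields continuity of $\conj{m_+}$.

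The only real obstacle is justifying the passage to the limit in the mate equation despite having merely a.e. convergence of the Pythagorean factors $a_n,b_n$; this is exactly what \thref{P:PythFactorStabProp} is designed to produce, and once it is combined with the uniform bound $|a_n|, |b_n| \leq 1$ on $\T$, dominated convergence handles the rest.
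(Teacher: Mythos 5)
Your proof is correct and follows the same overall strategy as the paper: closed graph theorem for $F$-spaces, a.e.\ convergence of a subsequence of Pythagorean factors via \thref{P:PythFactorStabProp}, and passage to the limit in the mate equation. The difference lies in how you handle the right-hand side $\To_{\conj{a_n}}m_+(h_n)$. The paper first applies $\To_{\conj{a}}$ to both sides (using its continuity on $\BMOA$) and is then left to estimate $\|\To_{\conj{a-a_n}}m_+(h_n)\|_2$, where both factors vary with $n$; this forces the Hölder split $\|a-a_n\|_4\cdot\|m_+(h_n)\|_4$. Your decomposition
\[
\To_{\conj{a_n}}m_+(h_n) - \To_{\conj{a}}\conj{g} = P_+\bigl(\conj{a_n}(m_+(h_n)-\conj{g})\bigr) + P_+\bigl(\conj{a_n-a}\,\conj{g}\bigr)
\]
instead pairs a uniformly bounded factor with an $L^2$-convergent one in each summand, so the whole argument stays in $\hil^2$ with a single application of dominated convergence, and neither the $L^4$ detour nor the continuity of $\To_{\conj{a}}$ on $\BMOA$ is needed. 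This is a modest but genuine simplification, entirely in the spirit of the paper's proof.
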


\begin{proof}
Since $X$ is an $F$-space by property $(i)$ in Section \ref{S:GeneralCriterionSection}, the usual formulation of the closed graph theorem is applicable to our situation (see \cite[Section 2.15]{rudin1991functional}). Hence the operator $\conj{m_+}$ will be continuous if we can verify the validity of the implication
\[ \begin{cases}
    h_n \to h & \text{ in } X\\ \conj{m_+(h_n)} \to \conj{g} & \text{ in } \conj{\BMOA}
\end{cases} \quad \Rightarrow \quad \conj{m_+(h)} = \conj{g}.\]

Above we assume that $g \in \BMOA$, so that $m_+(h_n) \to g$ in the norm of $\BMOA$ (recall that we have set $\|g\|_{\BMOA} := \|\conj{g}\|_{\conj{\BMOA}}$). Let the two above hypotheses be satisfied. Since $X$ is continuously contained in $\N^+$ by property $(ii)$ in Section \ref{S:GeneralCriterionSection}, by \thref{P:PythFactorStabProp} and by passing to a subsequence we may assume that we have the pointwise convergence $b_n \to b$ and $a_n \to a$ almost everywhere on $\T$, where $h_n = b_n/a_n$ and $h = b/a$ are the corresponding Pythagorean factorizations. Since the mapping $f \mapsto \To_{\conj{a}} f$ is continuous on $\BMOA$, we have
\begin{align}
    \label{E:ContProofEq1}
    \To_{\conj{a}}g &= \lim_n \To_{\conj{a}}m_+(h_n) \\
    &= \lim_n \To_{\conj{a-a_n}} m_+(h_n) + \To_{\conj{a_n}} m_+(h_n) \nonumber \\
    &= \lim_n \To_{\conj{a-a_n}}m_+(h_n) + \To_{\conj{b_n}}m \nonumber
\end{align} 
The convergence above holds in the norm of $\BMOA$, and so in particular in the sense of pointwise convergence on $\D$. Now, \begin{equation}
    \label{E:ContProofEq2}
    \To_{\conj{b_n}}m \to \To_{\conj{b}}m
\end{equation} in (say)  $\hil^2$, and so pointwise on $\D$, since $b_n \to b$ almost everywhere on $\T$. Since $m_+(h_n) \to g$ in $\BMOA$, the same is true in the space $\hil^4$. Therefore, by contractivity of the projection $P_+$ on $L^2(\T)$ and Hölder's inequality, we may estimate
\begin{align*}
    \| \To_{\conj{a-a_n}} m_+(h_n) \|_2^2 &= \|P_+ [\conj{(a - a_n)} m_+(h_n)] \|_2^2 \\ &\leq \| \conj{(a_n - a)}m_+(h_n)\|_2^2 \\ &\leq \| a_n - a\|_4^2 \cdot \|m_+(h_n)\|_4^2.
\end{align*} Since $a_n \to a$ almost everywhere on $\T$, we obtain $\|\To_{\conj{a-a_n}} m_+(h_n) \|_2 \to 0$, and from \eqref{E:ContProofEq1} and \eqref{E:ContProofEq2} we conclude that 
\[ \To_{\conj{a}}g(z) = \To_{\conj{b}}m(z), \quad z \in \D.\] According to \eqref{E:MateEquationHb}, this means that $g = m_+(h)$. We have thus verified that $\conj{m_+}: X \to \conj{\BMOA}$ is a closed operator. By the closed graph theorem this operator is also continuous.
\end{proof}

\begin{proof}[Completion of the proof of \thref{P:MainProp}]
Recalling the result of Section \ref{S:ProofSecPart1}, what remains to be established is that any universal multiplier $m$ for $\mathcal{F}(X)$ induces a continuous Hankel operator $H_{\conj{m}}:X \to \conj{\BMOA}$. By \thref{L:TContinuity}, for every $h =  b/a \in X$, the equation \eqref{E:MateEquationHb} has a unique solution $m_+(h)$ which depends continuously on $h \in X$. Let us suppose that $h = b/a \in \hil^\infty$. Then $1/a \in \hil^\infty$, and we may apply the Toeplitz operator $\To_{1/\conj{a}}$ and relation \eqref{E:ToeplitzHomRelation} to the mate equation in \eqref{E:MateEquationHb} to obtain
\[ \To_{\conj{h}}m = m_+(h).\] 
Since $\To_{\conj{h}}m = P_+\conj{h}m$, the relation \eqref{E:PplusPminusEq} implies that 
\begin{equation} \label{E:ToepliHankelConstEq} \conj{m_+(h)} = \conj{\To_{\conj{h}}m} = H_{\conj{m}}h + c(h) \end{equation} for a constant $c(h)$ which satisfies \[ |c(h)| = |\widehat{\conj{m}h}(0)|.\] In other words, $c(h)$ is the value at $z=0$ of the function $m_+(h)$. Since evaluations are continuous on $\BMOA$, it follows that $h \mapsto m_+(h)(0)$ is a continuous linear functional defined on all of $X$.  For $h \in \hil^\infty$, we have from \eqref{E:ToepliHankelConstEq} the equality \[H_{\conj{m}}h = \conj{m_+(h)} - m_+(h)(0).\] Since in the right-hand side of this equality appears a continuous linear mapping $X \to \conj{\BMOA}$, the left-hand side extends to a continuous linear operator $X \to \conj{\BMOA}$. That is, $m \in H(X, \conj{\BMOA})$.
\end{proof}

\section{Further comments and remarks}\label{sectfin}

We end the article with a list of a few unresolved matters and ideas for further research.

\subsection{Range spaces of coanalytic Toeplitz operators}
One consequence of following the presented new approach to universal multipliers is that we never needed to characterize the corresponding intersection of co-analytic Toeplitz operator ranges, which is a necessary step in the Davis-McCarthy approach. Denoting by $M(\conj{\varphi})$ the range of the co-analytic Toeplitz operator $\To_{\conj{\varphi}}:\hil^2 \to \hil^2$, a consequence of results from \cite{davis1991multipliers} is that we have 
\[ \mathcal{G}_{1/2} = \bigcap_{\varphi \in \hil^\infty\setminus \{0\}} M(\conj{\varphi}) = \bigcap_{\varphi \in \hil^\infty\setminus \{0\}} \Mult{M(\conj{\varphi})} = \bigcap_{b \in \mathcal{F}(\N^+)} \Mult{\hb}.\] Naively, one may expect from our main result that we should have a similar equality, with $\mathcal{G}$ replaced by $\Lambda^a_{1/p}$, and the intersections corresponding to spaces $M(\conj{\varphi})$ restricted to those (outer, bounded) symbols $\varphi$ for which $1/\varphi \in \hil^p$, namely
\[ \Lambda^a_{1/p} = \bigcap_{\varphi : 1/\varphi \in \hil^p } M(\conj{\varphi}) = \bigcap_{\varphi : 1/\varphi \in \hil^p} \Mult{M(\conj{\varphi})}.\]
That this is not the case in general is seen by setting $p = 2$. Then, for every $m \in \hil^\infty$  and $1/\varphi \in \hil^2$, we have $m = \To_{\conj{\varphi}}g$ where $g = P_+(m/\conj{\varphi})$, so $\hil^\infty \subseteq \bigcap_{1/\varphi \in \hil^2} M(\conj{\varphi})$. However, the equality 
\[ \Lambda^a_{1/p} = \bigcap_{\varphi: 1/\varphi \in \hil^p} \Mult{M(\conj{\varphi})}\] is not ruled out by this argument. We conjecture that this equality holds.

\subsection{Other families of symbols}
In this article, we focused on what we considered natural classes of symbols $\mathcal{F}(\hil^p)$ and $\mathcal{F}(\N^q)$, but similar questions may of course be explored about universal multipliers for families $\mathcal{F}(X)$ corresponding to any other reasonable space $X$. In relation to this, recall that we have characterized the multipliers of symbol families $\mathcal{F}(\hil^p)$ for finite $p$, and we note also that the case $b/a \in \hil^\infty$ corresponds to the equality $\hb = \hil^2$, with equivalence of norms. So $\Mult{\hb} = \hil^\infty$ if $b/a \in \hil^\infty$. We have not been able to compute the universal multipliers for the intermediate case $b \in \mathcal{F}(\BMOA)$. Our \thref{P:MainProp} does not apply in this case, since $\BMOA$ does not satisfy the hypotheses of that proposition. What are the universal multipliers in this case?

\subsection{Relating containment of a space within the multiplier algebra} For $p>2$, the authors own previous article \cite{MalmanSeco} allows for a different way to express our main theorem: there we showed that $b/a \in \hil^{\frac{2p}{p-2}}$ is equivalent with $\hil^p \subset \hil(b)$ (see Theorem A in \cite{MalmanSeco}). This makes plausible a description of universal multipliers for classes $\mathcal{F}$ described in terms of containing other fixed spaces of analytic functions. The case of the Dirichlet space promises to offer some resistance.

\subsection*{Acknowledgements}
The authors were funded through grant PID2023-149061NA-I00 by the Generaci\'on de Conocimiento programme and grant RYC2021-034744-I by the Ram\'on y Cajal programme from Agencia Estatal de Investigaci\'on (Spanish Ministry of Science, Innovation and Universities).

%%%%%%%%%%% To ease editing, use normal size for the references:

\normalsize

\bibliographystyle{plain}
\bibliography{mybib}

\end{document}